\newtheorem{thm}{Theorem}[section]
\newtheorem{lem}{Lemma}[section]
\newtheorem{defi}{Definition}[section]
\newtheorem {Remark} {Remark} [section]
\numberwithin{equation}{section}
\def\sommaire{\@restonecolfalse\if@twocolumn\@restonecoltrue\onecolumn
\fi\chapter*{Sommaire\@mkboth{SOMMAIRE}{SOMMAIRE}}
  \@starttoc{toc}\if@restonecol\twocolumn\fi}
\def\thebibliographie#1{\chapter*{Bibliographie\@mkboth
  {BIBLIOGRAPHIE}{BIBLIOGRAPHIE}}\list
  {[\arabic{enumi}]}{\settowidth\labelwidth{[#1]}\leftmargin\labelwidth
  \advance\leftmargin\labelsep
  \usecounter{enumi}}
  \def\newblock{\hskip .11em plus .33em minus .07em}
  \sloppy\clubpenalty4000\widowpenalty4000
  \sfcode`\.=1000\relax}
\def\references#1{\section*{R\'ef\'erences\@mkboth
  {R\'EF\'ERENCES}{R\'EF\'ERENCES}}\list
  {[\arabic{enumi}]}{\settowidth\labelwidth{[#1]}\leftmargin\labelwidth
  \advance\leftmargin\labelsep
  \usecounter{enumi}}
  \def\newblock{\hskip .11em plus .33em minus .07em}
  \sloppy\clubpenalty4000\widowpenalty4000
  \sfcode`\.=1000\relax}
\def\inte#1{
\displaystyle\mathop{#1\kern0pt}^\circ
}
\newcommand{\beq}{\begin{equation}}
\newcommand{\eeq}{\end{equation}}
\newcommand{\ben}{\begin{eqnarray}}
\newcommand{\een}{\end{eqnarray}}
\newcommand{\beno}{\begin{eqnarray*}}
\newcommand{\eeno}{\end{eqnarray*}}
\let\D=\Delta
\def\dH{\dot{H}}
\def\T{\mathbb{T}}
\def\virgp{\raise 2pt\hbox{,}}
\def\cdotpv{\raise 2pt\hbox{;}}
\def\C{\mathop{\mathbb C\kern 0pt}\nolimits}
\def\DD{\mathop{\mathbb D\kern 0pt}\nolimits}
\def\EE{\mathop{\mathbb E\kern 0pt}\nolimits}
\def\K{\mathop{\mathbb K\kern 0pt}\nolimits}
\def\N{\mathop{\mathbb  N\kern 0pt}\nolimits}
\def\Q{\mathop{\mathbb  Q\kern 0pt}\nolimits}
\def\R{{\mathop{\mathbb R\kern 0pt}\nolimits}}
\def\SS{\mathop{\mathbb  S\kern 0pt}\nolimits}
\def\St{\mathop{\mathbb  S\kern 0pt}\nolimits}
\def\Z{\mathop{\mathbb  Z\kern 0pt}\nolimits}
\def\ZZ{{\mathop{\mathbb  Z\kern 0pt}\nolimits}}
\def\H{{\mathop{{\mathbb  H\kern 0pt}}\nolimits}}
\def\PP{\mathop{\mathbb P\kern 0pt}\nolimits}
\def\TT{\mathop{\mathbb T\kern 0pt}\nolimits}
\def\h {{\rm h}}
\def\v {{\rm v}}
\def\na{\nabla}
\def\pa{\partial}
\newcommand{\andf}{\quad\hbox{and}\quad}
\newcommand{\with}{\quad\hbox{with}\quad}
\def\dive{\mathop{\rm div}\nolimits}
\begin{document}

\title{DETERMINISTIC AND STOCHASTIC 2D NAVIER-STOKES EQUATIONS WITH ANISOTROPIC VISCOSITY}
\maketitle

\author{Siyu Liang \footnotemark[1],\footnotemark[2]
        Ping Zhang  \footnotemark[1],
        Rongchan Zhu \footnotemark[3],\footnotemark[4]}
        \footnotetext[1]{Academy of Mathematics and Systems Science, Chinese Academy of Sciences, Beijing 100190, China}
        \footnotetext[2]{School of Mathamatical Sciences, University of Chinese Academy of Sciences, Beijing 100049, China}
        \footnotetext[3]{Department of Mathematics, Beijing Institute of Technology, Beijing 100081, China}
        \footnotetext[4]{Department of Mathematics, University of Bielefeld, D-33615 Bielefeld, Germany}
\\

%

\begin{abstract}
In this paper, we investigate both deterministic and stochastic 2D Navier Stokes equations with anisotropic viscosity.
For the deterministic case, we prove the global well-posedness of the system with initial data  in the anisotropic
 Sobolev space $\tilde{H}^{0,1}.$  For the stochastic case,  we obtain existence of the martingale solutions
  and pathwise uniqueness of the solutions, which imply  existence of the probabilistically strong solution to
  this system by the Yamada-Watanabe Theorem.
\end{abstract}

\bibliographystyle{plain}

\section{Introduction}

We recall the incompressible classical Navier-Stokes system for incompressible fluids
\begin{equation*}
(NS_\nu)\quad \left\{
\begin{aligned}
& \partial_{t}u+u\cdot\nabla u-\nu \Delta u=-\nabla p, \\
&\dive u=0, \\
&u\mid_{t=0}=u_{0},
\end{aligned}
\right .
\end{equation*}
where $\nu>0$ is the viscosity of the fluid, $v$ and  $p$ denote the velocity and the  pressure of the fluid respectively.
In 1934, J. Leray  proved global existence of finite energy weak solutions to $(NS_\nu)$ in the whole space $\R^d,$ for $d= 2, 3,$ in the seminar paper \cite{leray}.
When $d=2,$  global weak solutions to $(NS_\nu)$ are unique. However, when $d=3,$ the regularities and uniqueness of Leray solutions to $(NS_\nu)$
 are still widely open in the field of mathematical fluid mechanics except the case when the norm of the initial data is small compared to the viscosity
$\nu.$

 Here we consider the incompressible Navier-Stokes equations with partial dissipation.
 Systems of this type appear in geophysical fluids( see for instance \cite{chemin2006mathematical, ped}).
 In fact, instead of putting the
classical viscosity $-\nu\D$ in $(NS_\nu),$ meteorologist often
modelize turbulent diffusion by putting a viscosity of the form:
$-\nu_h\D_h-\nu_3\pa_{x_3}^2,$ where $\nu_h$ and $\nu_3$ are
empiric constants, and $\nu_3$ is usually much smaller than
$\nu_h.$ We refer to the book of J. Pedlovsky \cite{ped}, Chapter~4 for a more complete discussion. And in the particular case of
the so-called Ekman layers  \cite{ekman1905influence, grenier1997ekman} for rotating fluids, $\nu_{3}=\epsilon \nu_{h}$ and $\epsilon$ is a very small parameter.
 In \cite{chemin2000fluids, chemin2007global,  paicu2005equation}, the authors consider the global well-posedness of such system with small initial
 data in some anisotropic Besov type spaces.
However, for this 3D anisotropic Navier-Stokes equation, there is no result concerning global existence of weak solutions. In this paper,
we concentrate on the 2D case first.

The aim of this paper is to investigate both the following deterministic system on $\mathbb{R}^{2}$ or on the two dimensional torus $\mathbb{T}^2$
\begin{equation}\label{S1eq1}
\left\{
\begin{aligned}
& \partial_{t}u+u\cdot\nabla u-\partial_{1}^{2}u=-\nabla p, \\
&\dive u=0, \\
&u\mid_{t=0}=u_{0},
\end{aligned}
\right .
\end{equation}
and the following stochastic  system on  $\mathbb{T}^2$
\begin{equation}\label{S1eq2}
\left\{
\begin{aligned}
& du+(u\cdot\nabla u-\partial _{1}^{2}u)dt=\sigma(t,u) dW-\nabla p dt, \\
&\dive u=0, \\
&u\mid_{t=0}=u_{0},
\end{aligned}
\right .
\end{equation}
where $\sigma$ is the random external force and $W$ is an $\ell^2$- cylindrical Wiener process, the definition of which will be introduced in Section 4.
It is well-known that for the 2D incompressible Euler system with initial data in $s$-order Sobolev space $H^{s}(\R^2)$ for $s>2$, there exists a unique global solution in $L^{\infty}_{\rm loc}(\R^+,H^{s}(\R^2))$ (see \cite{Bahouri2011Fourier} for instance). As \eqref{S1eq1} is an intermediate  equation between 2D Euler equations and 2D Navier Stokes equations, we also have similar global well-posedness for \eqref{S1eq1} with initial data  in $H^{s}(\R^2)$ for $s>2$.
 Since \eqref{S1eq1} is more dissipative than Euler system, we are going to prove its global well-posedness with initial data in $\tilde{H}^{0,1}$ (see Section 2 for the definition of $\tilde{H}^{0,1}$ ).  For the stochastic two dimensional Euler equation, we can only deal with it driven by additive or linear multiplicative noise (See \cite{Glatt2014Local}). But
  for the anisotropic system \eqref{S1eq1}, we can solve the martingale problem with general multiplicative noise. The main novelty is an $H^{0,1}$- uniform estimate, the proof of which depends  crucially on the divergence free condition
  (see \eqref{S3eq2}).

The plan of this paper is as follows. In Section \ref{Sect2}, we  introduce some notations and  recall some preliminaries. In the following two sections, we  first study the deterministic  equation \eqref{S1eq1} and then we consider \eqref{S1eq2} with stochastic external force which may depend on the velocity $u$.

\textbf{Main results for deterministic part}: We prove the existence and uniqueness of weak solutions in the space $L^{\infty}(\mathbb{R}^+; H^{0,1})\cap L^{2}(\mathbb{R}^+; \dH^{1,1})$ for the   deterministic equation \eqref{S1eq1} (see Theorem \ref{thm1} below). In order to prove existence results, we need both the  $L^{2}$  as well as $H^{0,1}$  uniform estimate
for appropriate approximate solutions to \eqref{S1eq1}. (The definition of $H^{0,1}$ and $\dH^{1,1}$ are given in Section 2)   Note that the only $L^{2}$ uniform estimate does not provide the compactness of the approximate
  solutions  due to the lack of the estimate for $\|\partial_{2}u\|_{L^{2}}.$  To obtain the uniform $H^{0,1}$ estimate,  we have to use the divergence free condition
  of the velocity field in the crucial  way. The uniqueness of  solutions is proved by   estimating the difference between any two solutions, $w=u-v,$ in the space $L^{2}$.

\textbf{Main results for martingale solution of stochastic equation}: We prove  the existence of martingale solutions (see Theorem 4.1 below). Similar 3D
 equations  with a Brickman-Forchheimer term, $|u|^{2\alpha}u,$ have been studied in \cite{bessaih2017stochastic}.
In this paper we prove  the existence and uniqueness of probabilistically strong solutions to (1.2) in dimension 2 without Brickman-Forchheimer term. In order to do so,
we first use Galerkin approximations to project \eqref{S1eq2} in finite dimensional space.  Then we use It\^{o}'s formula to obtain the uniform estimates of $u_{n}$ in both $L^{2}$ and $H^{0,1}$. Similar to the deterministic case, the proof depends  heavily on the divergence free condition of the velocity field. However, since we have to take the expectation, we can not use It\^{o}'s formula to estimate  $\|u_{n}\|_{H^{0,1}}^{2}$ directly, instead  we shall multiply it by an exponential term $e^{-2c\int_{0}^{t}\parallel \partial_{1}u\parallel_{L^{2}}^{2}dt}$ for some proper $c$. Then by tightness methods (Skorokhod Theorem), we can obtain the existence of martingale solutions. Here we emphasize that we rely more heavily on the divergence free condition and we could not use similar methods as in \cite{bessaih2017stochastic} since we do not have Brickman-Forchheimer term, (which helps to obtain a better estimate for the solution.) And we have to use the martingale approach. Moreover, we can prove the pathwise uniqueness of solutions in $L^{2}$ space. Finally by the Yamada-Watanabe theorem, we obtain the existence and uniqueness of the (probabilistically) strong solution to (1.2).

\setcounter{equation}{0}
\section{Preliminaries}\label{Sect2}
We first recall some function spaces on $\mathbb{R}^{2}$ and on the two dimensional torus  $\mathbb{T}^{2}$.

\subsection{Function spaces on $\mathbb{R}^{2}$}

On $\mathbb{R}^{2}$, we recall the classical Sobolev spaces:
$$H^{s}(\mathbb{R}^{2}):=\Bigl\{u\in \mathcal{S}'(\mathbb{R}^{2});\parallel u \parallel_{H^{s}(\mathbb{R}^{2})}^{2}:=\int_{\mathbb{R}^{2}}(1+\mid \xi \mid^{2})^s\mid \hat{u}(\xi)\mid^{2}d\xi<\infty\,\Bigr\}, $$
where $\hat{u}$ denotes the Fourier transform of $u$.
Due to the anisotropic properties of \eqref{S1eq1}, we also need
 anisotropic Sobolev spaces. Let us recall the anisotropic Sobolev norms and spaces:
$$H^{s,s'}(\mathbb{R}^{2}):=\Bigl\{u\in \mathcal{S}'(\mathbb{R}^{2});\parallel u \parallel_{H^{s,s'}(\mathbb{R}^{2})}^{2}:=\int_{\mathbb{R}^{2}}(1+\mid \xi_{1} \mid^{2})^s(1+ \mid \xi_{2} \mid^{2})^{s'}\mid \hat{u}(\xi)\mid^{2}d\xi<\infty\,\Bigr\},                                   $$
where $\xi=(\xi_{1},\xi_{2})$.\\
We remark that the space $H^{s,s'}(\mathbb{R}^{2})$ endowed with the norm $\parallel \cdot \parallel_{H^{s,s'}(\mathbb{R}^{2})}$ is a Hilbert space.
We also recall the horizontally homogeneous
 anisotropic Sobolev norm and the space:
$$\dH^{s,s'}(\mathbb{R}^{2}):=\Bigl\{u\in \mathcal{S}'(\mathbb{R}^{2});\| u \|_{\dH^{s,s'}(\mathbb{R}^{2})}^{2}:=\int_{\mathbb{R}^{2}}\mid \xi_{1} \mid^{2s}(1+ \mid \xi_{2} \mid^{2})^{s'}\mid \hat{u}(\xi)\mid^{2}d\xi<\infty\,\Bigr\}. $$

In what follows,  we  shall use `$\h$' to denote the horizonal variable $x_{1},$  and `$\v$' to  the vertical direction $x_2.$
Let $\mathbb{R}^{2}=(\mathbb{R}_{\h},\mathbb{R}_{\v}).$
 For exponents $p,q\in[1,\infty)$, we  denote the space $L^{p}(\mathbb{R}_{\h},L^{q}(\mathbb{R}_{\v}))$ by $L_{\h}^{p}(L_{\v}^{q})(\mathbb{R}^{2}),$
 which is endowed with the norm
 $$\parallel u \parallel_{L_{\h}^{p}(L_{\v}^{q})(\mathbb{R}^{2})}:=
 \Bigl\{\int_{\mathbb{R}_{\h}}\bigl(\int_{\mathbb{R}_{\v}}\mid u(x_{1},x_{2}) \mid^{q}dx_{2}\bigr)^{\frac{p}{q}}dx_{1}\Bigr\}^{\frac{1}{p}}.$$
 Similar notation for $L_{\v}^{p}(L_{\h}^{q})(\mathbb{R}^{2})$. Then it follows from Minkowski inequality that
 \beno
 \begin{split}
 \parallel u \parallel_{L_{\h}^{p}(L_{\v}^{q})(\mathbb{R}^{2})}\leq \parallel u \parallel_{L_{\v}^{q}(L_{\h}^{p})(\mathbb{R}^{2})}  \text{ when } 1\leq q\leq p\leq\infty,\\
 \parallel u\parallel_{L_{\v}^{q}(L_{\h}^{p})(\mathbb{R}^{2})} \leq \parallel u \parallel_{L_{\h}^{p}(L_{\v}^{q})(\mathbb{R}^{2})} \text{when } 1\leq p\leq q\leq\infty .
 \end{split}
 \eeno

\subsection{Function spaces on $\mathbb{T}^{2}$}
Now we recall some function spaces for the two dimensional torus $\mathbb{T}^2$.
Let $\mathbb{T}^{2}=\mathbb{R}/2\pi \mathbb{Z}\times\mathbb{R}/2\pi \mathbb{Z}=(\mathbb{T}_{\h},\mathbb{T}_{\v})$.
 Similar to the whole space $\mathbb{R}^{2}$, we recall the anisotropic $L^{p}$ spaces:
$$\parallel u \parallel_{L_{\h}^{p}(L_{\v}^{q})(\mathbb{T}^{2})}:=
 \Bigl\{\int_{\mathbb{T}_{\h}}\bigl(\int_{\mathbb{T}_{\v}}\mid u(x_{1},x_{2}) \mid^{q}dx_{2}\bigr)^{\frac{p}{q}}dx_{1}\Bigr\}^{\frac{1}{p}}.$$
 Similar to the whole space, we also have:
 \beno
 \begin{split}
 \parallel u \parallel_{L_{\h}^{p}(L_{\v}^{q})(\mathbb{T}^{2})}\leq& \parallel u \parallel_{L_{v}^{q}(L_{h}^{p})(\mathbb{T}^{2})}  \text{ when } 1\leq q\leq p\leq\infty, \\
 \parallel u\parallel_{L_{\v}^{q}(L_{\h}^{p})(\mathbb{T}^{2})} \leq& \parallel u \parallel_{L_{h}^{p}(L_{\v}^{q})(\mathbb{T}^{2})}  \text{ when } 1\leq p\leq q\leq\infty.
 \end{split}
 \eeno

For $u\in L^{2}(\T^2)$, we consider the Fourier expansion of $u$:
 $$u(x)=\sum\limits_{k\in \mathbb{Z}^{2} }\hat{u}_{k}e^{ ik\cdot x} \with
 \hat{u}_{k}=\overline{\hat{u}_{-k}},$$
 where $\hat{u}_{k}:=\frac{1}{2\pi}\int_{[0,2\pi]\times [0,2\pi]}u(x)e^{- ik\cdot x}dx$ denotes the Fourier coefficient of $u$ on $\mathbb{T}^{2}$. It follows from Fourier-Plancherel equality that the series is convergent  in $L^{2}(\mathbb{T}^{2})$.

 Define the Sobolev norm :
 $$\parallel u \parallel_{H^{s}(\mathbb{T}^{2})}^{2}:=\sum\limits_{k\in \mathbb{Z}^{2} }(1+\mid k \mid^{2} )^s\mid \hat{u}_{k}\mid^{2},$$
 and the anisotropic Sobolev norms:
  \beno
  \begin{split}
  &\parallel u \parallel_{H^{s,s'}(\mathbb{T}^{2})}^{2}=\sum\limits_{k\in \mathbb{Z}^{2} }(1+\mid k_{1} \mid^{2} )^s(1+\mid k_{2} \mid^{2} )^{s'}\mid \hat{u}_{k}\mid^{2},\\
  &\parallel u \parallel_{\dH^{s,s'}(\mathbb{T}^{2})}^{2}=\sum\limits_{k\in \mathbb{Z}^{2} }\mid k_{1} \mid^{2s}(1+\mid k_{2} \mid^{2} )^{s'}\mid \hat{u}_{k}\mid^{2},
  \end{split}
  \eeno
  where $k=(k_{1},k_{2})$.\\
And we also define the Sobolev spaces $H^{s}(\mathbb{T}^{2})$, $H^{s,s'}(\mathbb{T}^{2})$ and $\dH^{s,s'}(\mathbb{T}^{2})$ as the completion of $C^{\infty}(\mathbb{T}^{2})$ with the norms $\parallel \cdot \parallel_{H^{s}(\mathbb{T}^{2})}$, $\parallel \cdot \parallel_{H^{s,s'}(\mathbb{T}^{2})}$ and $\parallel \cdot \parallel_{\dH^{s,s'}(\mathbb{T}^{2})}$ respectively.

 \subsection{Some other notations and definitions}
  We use $D$ to denote the domain $\mathbb{R}^{2}$ or $\mathbb{T}^{2}$.
Let us denote
\beno
\begin{split}
H:=&\left\{u\in L^{2}(D);\ \dive u =0 \ \right\},\\
V:=&\left\{u\in H^{1}(D),\ \dive u =0\ \right\},\\
\tilde{H}^{s,s'}:=&\left\{u\in H^{s,s'}(D),\ \dive u =0\ \right\}.
\end{split}
\eeno
Moreover, we use $(\cdot,\cdot)$ or $(\cdot \mid \cdot)$ to denote the scalar product
$$(u,v)=(u \mid v)=(u,v)_{L^{2}(D)}=\sum\limits_{j=1}^{2}\int_{D}u_{j}(x)v_{j}(x)dx.$$
We use $(\cdot,\cdot)_{H^{0,1}}$ or $(\cdot,\cdot)_{0,1}$  to denote the inner product
 $$(u,v)_{H^{0,1}(D)}=\sum\limits_{j=1}^{2}\int_{D}\bigl(u_{j}(x)v_{j}(x)+\partial_{2}u_{j}(x)\partial_{2}v_{j}(x)\bigr)\,dx.$$
 Let $$\textbf{P}:L^{2}(D)\rightarrow H  \text{ is the Leray projection operator to divergence free space}.$$
 By applying $\textbf{P}$ to \eqref{S1eq1}, we write
 $$\partial_{t}u=\textbf{P}(\partial_{1}^{2}u-u\cdot \nabla u). $$

As usual, when $u,v,w\in H^{1}(D)$, we denote
\beno
\begin{split}
B(u,v):=&u\cdot \nabla v,\\
B(u):=&u\cdot \nabla u,\\
b(u,v,w):=&(u\cdot \nabla v, w).
\end{split} \eeno
Then we have
 $b(u,v,w)=-b(u,w,v)$ for $u,v,w \in V$.
In particular, $b(u,v,v)=0$.

Let us end this section by  the definition of weak solution to (1.1)

\begin{defi}[ weak solution]\label{S2def1}
We call $u$ a global weak solution of (1.1) with the initial data $u_{0}$ if $u$ satisfies:
\begin{itemize}
\item[(i)] $u\in L^{\infty}(\R^+; \tilde{H}^{0,1}(D))\cap L^{2}(\R^+; \dH^{1,1}(D))$ ;
\item[(ii)] for any $\varphi\in C_{0}^{\infty}(D)$ with  $\dive\varphi=0$, and $t>0,$
\begin{equation}\label{S2eq1}
\int_{0}^{t}\bigl\{-(u, \partial_{t}\varphi)+(\partial_{1} u, \partial_{1}\varphi)+( u\cdot\nabla u, \varphi)\bigr\}\,ds
=(u_{0}, \varphi(0))-( u(t), \varphi(t)),
\end{equation}
\end{itemize}
  \end{defi}

\setcounter{equation}{0}
\section{The Deterministic Case}\label{Sect3}

 For simplicity, we always omit the domain $D$ in this section.

\begin{thm}\label{thm1}
{\sl Given solenoidal vector field  $u_{0}$ in $\tilde{H}^{0,1},$  \eqref{S1eq1} has a unique global weak solution $u\in
L^{\infty}(\R^+; \tilde{H}^{0,1})\cap L^{2}(\R^+; \dH^{1,1})$ in the sense of Definition \ref{S2def1}.}
\end{thm}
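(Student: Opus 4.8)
The plan is to follow the standard scheme for global well-posedness of 2D fluid equations: construct approximate solutions, derive a priori estimates uniform in the approximation parameter, pass to the limit using compactness, and finally prove uniqueness by an energy estimate on the difference of two solutions. The whole argument hinges on one essential new ingredient: an $H^{0,1}$ a priori estimate that exploits the divergence-free constraint to control the vertical derivative $\partial_2 u$, which the bare $L^2$ energy estimate cannot see because the dissipation is only $-\partial_1^2 u$.

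First I would set up a Friedrichs-type (or Galerkin, or vanishing-hyperviscosity) approximation $u^n$ to \eqref{S1eq1}, solving an ODE in a finite-dimensional space, with global existence for the approximate system following from the $L^2$ bound. The basic energy estimate is immediate: testing with $u^n$ and using $b(u^n,u^n,u^n)=0$ gives
\begin{equation*}
\frac12\frac{d}{dt}\|u^n\|_{L^2}^2 + \|\partial_1 u^n\|_{L^2}^2 = 0,
\end{equation*}
so $u^n$ is bounded in $L^\infty(\R^+;H)\cap L^2(\R^+;\dH^{1,0})$. The heart of the matter is the $\tilde H^{0,1}$ estimate: apply $\partial_2$ to the equation (equivalently test with $-\partial_2^2 u^n$), which yields
\begin{equation*}
\frac12\frac{d}{dt}\|\partial_2 u^n\|_{L^2}^2 + \|\partial_1\partial_2 u^n\|_{L^2}^2 = -b(\partial_2 u^n, u^n, \partial_2 u^n).
\end{equation*}
The cubic term must be bounded by something absorbable into the dissipation plus an $L^1_t$-in-time factor times $\|\partial_2 u^n\|_{L^2}^2$. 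Here is where $\dive u^n = 0$ is used crucially (cf. \eqref{S3eq2}): writing out $b(\partial_2 u^n,u^n,\partial_2 u^n) = \sum_{i,j}\int \partial_2 u^n_i\, \partial_i u^n_j\, \partial_2 u^n_j$, one replaces $\partial_2 u^n_2 = -\partial_1 u^n_1$ wherever the vertical component appears, trading a ``bad'' vertical derivative for a ``good'' horizontal one, and then estimates by anisotropic Sobolev/Ladyzhenskaya inequalities of the form $\|f\|_{L^2_h(L^\infty_v)}\lesssim \|f\|_{L^2}^{1/2}\|\partial_2 f\|_{L^2}^{1/2}$ and $\|f\|_{L^\infty_h(L^2_v)}\lesssim \|f\|_{L^2}^{1/2}\|\partial_1 f\|_{L^2}^{1/2}$, combined with the Minkowski inequalities recalled in Section~\ref{Sect2}. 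The outcome should be an estimate
\begin{equation*}
\frac{d}{dt}\|\partial_2 u^n\|_{L^2}^2 + \|\partial_1\partial_2 u^n\|_{L^2}^2 \le C\|\partial_1 u^n\|_{L^2}^2\,\|\partial_2 u^n\|_{L^2}^2,
\end{equation*}
so that Grönwall together with the already-established $\int_0^\infty\|\partial_1 u^n\|_{L^2}^2\,dt \le \frac12\|u_0\|_{L^2}^2$ gives a global-in-time bound on $\|\partial_2 u^n\|_{L^2}$ depending only on $\|u_0\|_{\tilde H^{0,1}}$. Combined with the energy identity this produces the uniform bound in $L^\infty(\R^+;\tilde H^{0,1})\cap L^2(\R^+;\dH^{1,1})$.

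With these uniform bounds, I would extract a weakly-$*$ convergent subsequence; to pass to the limit in the nonlinear term $u^n\cdot\nabla u^n$ one needs strong convergence in $L^2_{loc}$, which comes from an Aubin–Lions argument: the bound on $\partial_t u^n$ in some negative-order space (obtained directly from the equation, estimating $\textbf{P}(\partial_1^2 u^n - u^n\cdot\nabla u^n)$) together with the spatial regularity gives compactness. A mild subtlety on $\R^2$ is the lack of compact embedding at infinity, handled either by a diagonal/local-in-space argument or by working on $\T^2$ as in the rest of the paper; the anisotropic nature means one should be careful that $\dH^{1,1}$ controls both $\partial_1\partial_2 u$ and, via interpolation with the $L^2$ bound on $u$, enough of the full gradient locally. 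Then \eqref{S2eq1} is verified in the limit in the usual way, and the weak continuity in time plus the energy inequality give $u \in C_w(\R^+;\tilde H^{0,1})$, so the initial data is attained.

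For uniqueness, let $u,v$ be two solutions with the same data and set $w = u-v$. Then $w$ solves $\partial_t w - \partial_1^2 w + w\cdot\nabla u + v\cdot\nabla w = -\nabla(p_u-p_v)$, $\dive w = 0$, $w|_{t=0}=0$. Testing with $w$, using $b(v,w,w)=0$, we get
\begin{equation*}
\frac12\frac{d}{dt}\|w\|_{L^2}^2 + \|\partial_1 w\|_{L^2}^2 = -b(w,u,w),
\end{equation*}
and the term $b(w,u,w)$ is estimated by the same anisotropic inequalities, again invoking $\dive w = 0$ to convert $\partial_2 w_2$ into $\partial_1 w_1$, so that it is bounded by $\frac12\|\partial_1 w\|_{L^2}^2 + C(\|\partial_2 u\|_{L^2}^2 + \|\partial_1 u\|_{L^2}^2)\|w\|_{L^2}^2$; since the coefficient lies in $L^1_{loc}(\R^+)$ by the regularity class of $u$, Grönwall forces $w\equiv 0$. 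The main obstacle throughout is the $H^{0,1}$ estimate: making the cancellation from $\dive u = 0$ precise and checking that every term arising from $b(\partial_2 u, u, \partial_2 u)$ can indeed be controlled by $\|\partial_1\partial_2 u\|_{L^2}$ (the only genuinely available new dissipation) times an integrable-in-time prefactor, with no leftover term involving $\|\partial_2^2 u\|_{L^2}$, which is not controlled. Everything else is routine given the function-space machinery already recalled.
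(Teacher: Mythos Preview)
Your proposal is correct and matches the paper's approach closely: an approximation (the paper uses the vanishing-viscosity regularization $-\epsilon^2\partial_2^2 u^\epsilon$ with mollified data), the $L^2$ energy estimate, the crucial $H^{0,1}$ bound exploiting $\dive u=0$ together with the anisotropic interpolation inequalities (Lemma~\ref{S3lem2}), Aubin--Lions compactness with a diagonal argument on $\R^2$, and an $L^2$-Gronwall on the difference for uniqueness. One minor imprecision: the Gronwall coefficient in your uniqueness step will in fact involve $\|\partial_1\partial_2 u\|_{L^2}$ (cf.\ \eqref{S3eq31}) rather than just $\|\partial_1 u\|_{L^2}^2+\|\partial_2 u\|_{L^2}^2$, but it remains in $L^1_{\mathrm{loc}}$ so the argument is unaffected.
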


\begin{lem}\label{S3lem1}
{\sl Let $u$ be a global smooth enough solution to \eqref{S1eq1}. Then one has
\begin{equation}\label{S3eq1}
\| u(t) \|_{L^{2}}^{2}+2\int_{0}^{t}\| \partial_{1}u(s) \|_{L^{2}}^{2}ds\leq \| u_0\| _{L^{2}}^{2}.
\end{equation}}
\end{lem}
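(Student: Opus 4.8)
The plan is to derive the basic energy identity by testing the momentum equation \eqref{S1eq1} against $u$ itself in $L^2$. Taking the $L^2(D)$-inner product of $\partial_t u + u\cdot\nabla u - \partial_1^2 u = -\nabla p$ with $u$, the pressure term $(\nabla p, u) = -(p, \dive u) = 0$ vanishes by the divergence-free condition (integration by parts, with no boundary contribution on $\R^2$ by decay or on $\T^2$ by periodicity). The nonlinear term satisfies $(u\cdot\nabla u, u) = b(u,u,u) = 0$, which is exactly the identity $b(u,v,v) = 0$ recorded at the end of Section \ref{Sect2}, valid because $u \in V$. For the dissipative term, integration by parts in $x_1$ gives $-( \partial_1^2 u, u) = \| \partial_1 u \|_{L^2}^2$.

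Combining these, one obtains
\begin{equation*}
\frac{1}{2}\frac{d}{dt}\| u(t) \|_{L^2}^2 + \| \partial_1 u(t) \|_{L^2}^2 = 0.
\end{equation*}
Since $u$ is assumed smooth enough, $t \mapsto \| u(t) \|_{L^2}^2$ is differentiable and this identity holds pointwise in $t$. Integrating from $0$ to $t$ and multiplying by $2$ yields
\begin{equation*}
\| u(t) \|_{L^2}^2 + 2\int_0^t \| \partial_1 u(s) \|_{L^2}^2\, ds = \| u_0 \|_{L^2}^2,
\end{equation*}
which is even stronger than the claimed inequality \eqref{S3eq1} (the inequality allows for a loss, e.g. if one only has an approximate solution or wishes to pass a limit, but for a genuine smooth solution equality holds).

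There is essentially no serious obstacle here since the statement is hypothesized for a smooth solution; the only points requiring a word of care are the justification that the integrations by parts carry no boundary term (immediate on $\T^2$, and on $\R^2$ requiring sufficient spatial decay, which is subsumed in "smooth enough") and the cancellation $b(u,u,u)=0$, which again uses $\dive u = 0$. If one wanted to be scrupulous about the regularity class, one would note that it suffices that $u \in L^\infty_{\rm loc}(\R^+; H) \cap L^2_{\rm loc}(\R^+; V)$ with $\partial_t u$ in a suitable dual space so that the pairing $\langle \partial_t u, u\rangle = \frac12 \frac{d}{dt}\|u\|_{L^2}^2$ is legitimate (the Lions–Magenes lemma); but under the stated smoothness assumption this is automatic, so I would simply carry out the formal computation and remark that it is rigorous for smooth solutions.
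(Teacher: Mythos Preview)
Your proof is correct and follows exactly the same approach as the paper: take the $L^2$ inner product of the momentum equation with $u$, use $\dive u=0$ to kill the pressure and nonlinear terms, integrate by parts on the dissipative term, and then integrate in time. The paper's version is just the two-line summary of what you wrote; your observation that equality actually holds for smooth solutions is correct and consistent with the paper (which also writes the differential identity as an equality).
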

\begin{proof}
Indeed by taking the ${L^{2}}$ inner product of the momentum equation of \eqref{S1eq1} with $u$ and using $\dive u=0,$ we obtain
$$\frac{1}{2}\frac{d}{dt}\| u(t) \| _{L^{2}}^{2}+\| \partial_{1}u \|_{L^{2}}^{2}=0.$$
Integrating the above inequality over $[0,t]$ leads to \eqref{S3eq1}.
\end{proof}

\begin{lem}\label{S3lem2}
{\sl Under the same assumption of Lemma \ref{S3lem1}, we have
\begin{equation}\label{S3eq2}
\begin{split}
\|\partial_{2}u(t)\|_{L^{2}}^{2}+\int_{0}^{t}\|\partial_{1}\partial_{2}u(s)\|_{L^{2}}^{2}ds
&\leq
\|\partial_{2}u_0\|_{L^{2}}^{2}e^{C\| u_0\|_{L^{2}}^{2}}
\end{split}
\end{equation}
for some constant $C>0$.}
\end{lem}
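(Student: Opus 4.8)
The plan is to derive the differential inequality
\begin{equation}\label{pf:aim}
\frac{d}{dt}\|\partial_{2}u\|_{L^{2}}^{2}+\|\partial_{1}\partial_{2}u\|_{L^{2}}^{2}\leq C\|\partial_{1}u\|_{L^{2}}^{2}\,\|\partial_{2}u\|_{L^{2}}^{2},
\end{equation}
and then integrate it with the help of Lemma \ref{S3lem1}, which provides $\int_{0}^{t}\|\partial_{1}u(s)\|_{L^{2}}^{2}\,ds\leq\tfrac12\|u_0\|_{L^{2}}^{2}$. To obtain \eqref{pf:aim} I would apply $\partial_{2}$ to the momentum equation of \eqref{S1eq1} and take the $L^{2}$ inner product with $\partial_{2}u$. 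The pressure term drops because $(\nabla\partial_{2}p,\partial_{2}u)=-(\partial_{2}p,\dive\partial_{2}u)$ with $\dive\partial_{2}u=\partial_{2}\dive u=0$, and the convection term vanishes since $(u\cdot\nabla\partial_{2}u,\partial_{2}u)=b(u,\partial_{2}u,\partial_{2}u)=0$. This leaves
\begin{equation}\label{pf:id}
\tfrac12\frac{d}{dt}\|\partial_{2}u\|_{L^{2}}^{2}+\|\partial_{1}\partial_{2}u\|_{L^{2}}^{2}=-\bigl((\partial_{2}u)\cdot\nabla u,\partial_{2}u\bigr)=:N=-\sum_{j,m}\int\partial_{2}u_{m}\,\partial_{m}u_{j}\,\partial_{2}u_{j}\,dx .
\end{equation}

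Everything hinges on estimating $N$, and the divergence free condition enters twice. First, expanding $N$ in components and substituting $\partial_{1}u_{1}=-\partial_{2}u_{2}$, the terms combining into $(\partial_{2}u_{1})^{2}(\partial_{1}u_{1}+\partial_{2}u_{2})$ drop out, leaving the purely cubic expression
\begin{equation}\label{pf:N}
N=-\int\partial_{2}u_{1}\,\partial_{2}u_{2}\,\partial_{1}u_{2}\,dx-\int(\partial_{2}u_{2})^{3}\,dx .
\end{equation}
This is the first use of $\dive u=0$: it kills exactly the terms carrying no horizontal derivative on one factor.

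To bound the two terms in \eqref{pf:N} I would use anisotropic H\"older in $(x_{1},x_{2})$ together with the one dimensional interpolation inequalities $\|g\|_{L^{\infty}_{\h}(L^{2}_{\v})}\lesssim\|g\|_{L^{2}}^{1/2}\|\partial_{1}g\|_{L^{2}}^{1/2}$ and $\|g\|_{L^{2}_{\h}(L^{\infty}_{\v})}\lesssim\|g\|_{L^{2}}^{1/2}\|\partial_{2}g\|_{L^{2}}^{1/2}$. For the first term of \eqref{pf:N}, putting $\partial_{1}u_{2}$ in $L^{2}$, $\partial_{2}u_{1}$ in $L^{\infty}_{\h}(L^{2}_{\v})$ (which costs only $\partial_{1}(\partial_{2}u_{1})=\partial_{1}\partial_{2}u_{1}$) and $\partial_{2}u_{2}$ in $L^{2}_{\h}(L^{\infty}_{\v})$, the last step would require $\partial_{2}^{2}u_{2}$, which is not dissipated; here the divergence free condition is used a second time, as $\partial_{2}^{2}u_{2}=-\partial_{1}\partial_{2}u_{1}$, giving $\|\partial_{2}u_{2}\|_{L^{2}_{\h}(L^{\infty}_{\v})}\lesssim\|\partial_{2}u_{2}\|_{L^{2}}^{1/2}\|\partial_{1}\partial_{2}u_{1}\|_{L^{2}}^{1/2}$. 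Collecting the factors,
$$\Bigl|\int\partial_{2}u_{1}\,\partial_{2}u_{2}\,\partial_{1}u_{2}\,dx\Bigr|\lesssim\|\partial_{2}u_{1}\|_{L^{2}}^{1/2}\|\partial_{1}\partial_{2}u_{1}\|_{L^{2}}^{1/2}\,\|\partial_{2}u_{2}\|_{L^{2}}^{1/2}\|\partial_{1}\partial_{2}u_{1}\|_{L^{2}}^{1/2}\,\|\partial_{1}u_{2}\|_{L^{2}}\lesssim\|\partial_{1}u\|_{L^{2}}\|\partial_{2}u\|_{L^{2}}\|\partial_{1}\partial_{2}u\|_{L^{2}}.$$
For the second term one first rewrites $-\int(\partial_{2}u_{2})^{3}\,dx=\int(\partial_{2}u_{2})^{2}\,\partial_{1}u_{1}\,dx$ (again $\partial_{2}u_{2}=-\partial_{1}u_{1}$) and then repeats the scheme: one copy of $\partial_{2}u_{2}$ in $L^{\infty}_{\h}(L^{2}_{\v})$, the other in $L^{2}_{\h}(L^{\infty}_{\v})$ via $\partial_{2}^{2}u_{2}=-\partial_{1}\partial_{2}u_{1}$, and $\partial_{1}u_{1}$ in $L^{2}$, obtaining the same bound. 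Hence $|N|\lesssim\|\partial_{1}u\|_{L^{2}}\|\partial_{2}u\|_{L^{2}}\|\partial_{1}\partial_{2}u\|_{L^{2}}$. I expect this to be the delicate step: without the identity $\partial_{2}^{2}u_{2}=-\partial_{1}\partial_{2}u_{1}$ one is forced to spend a $\partial_{2}$ on $\partial_{2}u_{2}$, and the best one gets is $|N|\lesssim\|\partial_{1}u\|_{L^{2}}^{1/2}\|\partial_{2}u\|_{L^{2}}^{3/2}\|\partial_{1}\partial_{2}u\|_{L^{2}}$, whose Young dual $\|\partial_{1}u\|_{L^{2}}\|\partial_{2}u\|_{L^{2}}^{3}$ has no uniform-in-time integral — this would yield only a local statement.

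It remains to close the estimate. Young's inequality turns the last bound into $|N|\leq\tfrac12\|\partial_{1}\partial_{2}u\|_{L^{2}}^{2}+C\|\partial_{1}u\|_{L^{2}}^{2}\|\partial_{2}u\|_{L^{2}}^{2}$, which with \eqref{pf:id} is \eqref{pf:aim}. Setting $\Phi(t):=C\int_{0}^{t}\|\partial_{1}u(s)\|_{L^{2}}^{2}\,ds$, \eqref{pf:aim} gives $\frac{d}{dt}\bigl(e^{-\Phi}\|\partial_{2}u\|_{L^{2}}^{2}\bigr)+e^{-\Phi}\|\partial_{1}\partial_{2}u\|_{L^{2}}^{2}\leq0$; integrating over $[0,t]$, using that $\Phi$ is nondecreasing so that $e^{-\Phi(s)}\geq e^{-\Phi(t)}$ for $s\leq t$ (which also absorbs the dissipation term), and then inserting $\Phi(t)\leq\tfrac{C}{2}\|u_0\|_{L^{2}}^{2}$ from Lemma \ref{S3lem1}, yields \eqref{S3eq2} after relabelling the constant. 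On $\T^{2}$ the same computation applies to mean-zero vector fields, with the analogous one dimensional inequalities for periodic functions; the algebraic reduction and the two uses of $\dive u=0$ are unchanged.
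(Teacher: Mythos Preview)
Your proof is correct and follows essentially the same approach as the paper: differentiate in $x_2$, test against $\partial_2 u$, use the divergence-free condition to reduce the trilinear term to the two integrals in your \eqref{pf:N}, estimate these via the anisotropic interpolation inequalities $\|g\|_{L^\infty_\h(L^2_\v)}\lesssim\|g\|_{L^2}^{1/2}\|\partial_1 g\|_{L^2}^{1/2}$ and its vertical analogue (invoking $\partial_2^2u_2=-\partial_1\partial_2 u_1$ exactly as the paper does), and close with Young plus Gronwall against Lemma~\ref{S3lem1}. The only organizational difference is that the paper splits $(\partial_2(u\cdot\nabla u)\mid\partial_2 u)$ by the \emph{target} component first and shows the $u^1$-component vanishes identically, whereas you first strip off the transport piece $(u\cdot\nabla\partial_2 u,\partial_2 u)=0$ and then expand the commutator; the resulting two surviving integrals and their estimates are the same.
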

\begin{proof}
By Taking $\pa_2$ to the momentum equation of \eqref{S1eq1} and   then taking $L^2$ inner product of the resulting equation with $\partial_{2}u$, we obtain
\beq \label{S3eq3}
\frac{1}{2}\frac{d}{dt}\| \partial_{2}u(t) \| _{L^{2}}^{2}+
\| \partial_{1}\partial_{2}u \|_{L^{2}}^{2}\leq -(\partial_{2}(u\cdot \nabla u )\mid \partial_{2}u).\eeq
It is easy to observe that
\begin{equation}\label{S3eq4}
(\partial_{2}(u\cdot \nabla u )\mid \partial_{2}u)=(\partial_{2}(u\cdot \nabla u^{1} )\mid \partial_{2}u^{1})+(\partial_{2}(u\cdot \nabla u^{2} )\mid \partial_{2}u^{2}),
\end{equation}
where $u=(u^1,u^2)$.\\
For the first term on the right-hand side of \eqref{S3eq4}, we have
\begin{equation*}
\begin{split}
(\partial_{2}(u\cdot \nabla u^{1} )\mid \partial_{2}u^{1})=&(\partial_{2}(u^{1}\partial_{1}u^{1}+u^{2}\partial_{2}u^{1})\mid \partial_{2}u^{1})\\
=&(\partial_{2}u^{1}\partial_{1}u^{1}\mid \partial_{2}u^{1})+(u^{1}\partial_{2}\partial_{1}u^{1}\mid \partial_{2}u^{1})\\
&+
(\partial_{2}u^{2}\partial_{2}u^{1}\mid \partial_{2}u^{1})+(u^{2}\partial_{2}^{2}u^{1}\mid \partial_{2}u^{1}),
\end{split}
\end{equation*}
Yet due to $\dive u=0,$ we achieve
\beno
(\partial_{2}u^{1}\partial_{1}u^{1}\mid \partial_{2}u^{1})+
(\partial_{2}u^{2}\partial_{2}u^{1}\mid \partial_{2}u^{1})=0,
\eeno
and
\begin{equation*}
\begin{split}
(u^{1}\partial_{2}\partial_{1}u^{1}\mid \partial_{2}u^{1})+&(u^{2}\partial_{2}^{2}u^{1}\mid \partial_{2}u^{1})
=\bigl(u\cdot\na\pa_2u^1 |\pa_2u^1\bigr)\\
&=\frac12\int_{D}u\cdot\na |\pa_2u^1|^2\,dx=-\frac12\int_{D}\dive u |\pa_2u^1|^2\,dx=0.
\end{split}
\end{equation*}
This leads to
\beq \label{S3eq5}
(\partial_{2}(u\cdot \nabla u^{1} )\mid \partial_{2}u^{1})=0.
\eeq

For the second term on the right-hand side of \eqref{S3eq4}, again due to $\dive u=0$, we have:
\begin{equation*}
\begin{split}
(\partial_{2}(u\cdot \nabla u^{2} )\mid \partial_{2}u^{2})=&(\partial_{2}(u^{1}\partial_{1}u^{2})\mid \partial_{2}u^{2})+(\partial_{2}(u^{2}\partial_{2}u^{2})\mid \partial_{2}u^{2})\\
=&\bigl(\pa_2u\cdot\na u^2 | \pa_2u^2\bigr).
\end{split}
\end{equation*}
The second equality is due to
$$(u^{1}\partial_{1}\partial_{2}u^{2}\mid \partial_{2}u^{2})+(u^{2}\partial_{2}^{2}u^{2}\mid \partial_{2}u^{2})
=-\frac{1}{2}(\partial_{1}u^{1}\partial_{2}u^{2}\mid \partial_{2}u^{2})-\frac{1}{2}(\partial_{2}u^{2}\partial_{2}u^{2}\mid \partial_{2}u^{2})=0.$$
Whereas notice that
\begin{equation*}
\begin{split}
&\mid\bigl(\pa_2u\cdot\na u^2 | \pa_2u^2\bigr)\mid\\
&=\mid(\partial_{2}u^{1}\partial_{1}u^{2}\mid \partial_{2}u^{2})+(\partial_{2}u^{2}\partial_{2}u^{2}\mid \partial_{2}u^{2})\mid\\
&\leq \bigl(\| \partial_{2}u^{1}\|_{L_{\h}^{\infty}(L_{\v}^{2})}\| \partial_{1}u^{2}\| _{L_{\h}^{2}(L_{\v}^{\infty})}+\| \partial_{1}u^{1}\| _{L_{\h}^{2}(L_{\v}^{\infty})}
\| \partial_{2}u^{2}\|_{L_{\h}^{\infty}(L_{\v}^{2})}\bigr)\| \partial_{2}u^{2}\|_{L^{2}},
\end{split}
\end{equation*}
from which and
\beq \label{S3eq6}
\| u \|_{L^2_\v(L^\infty_\h)}\lesssim \| u\|_{L^{2}}^{\frac{1}{2}}\| \partial_{x_1}u\|_{L^{2}}^{\frac{1}{2}}\andf
\| u \|_{L^2_\h(L^\infty_\v)}\lesssim \| u\|_{L^{2}}^{\frac{1}{2}}\| \partial_{x_2}u\|_{L^{2}}^{\frac{1}{2}} ,\eeq
we infer
\begin{equation*}
\begin{split}
\mid\bigl(\pa_2u\cdot\na u^2 | \pa_2u^2\bigr)\mid
\lesssim\bigl(&\| \partial_{1}\partial_{2}u^{1}\|_{L^{2}}^{\frac{1}{2}}
 \| \partial_{2}u^{1}\|_{L^{2}}^{\frac{1}{2}}\| \partial_{1}u^{2}\|_{L^{2}}^{\frac{1}{2}}\| \partial_{1}\partial_{2}u^{2}\|_{L^{2}}^{\frac{1}{2}}\\
 &+\| \partial_{1}\partial_{2}u^{1}\|_{L^{2}}^{\frac{1}{2}}\| \partial_{1}u^{1}\|_{L^{2}}^{\frac{1}{2}}\| \partial_{1}\partial_{2}u^{2}\|_{L^{2}}^{\frac{1}{2}}\| \partial_{2}u^{2}\|_{L^{2}}^{\frac{1}{2}}\bigr)\| \partial_{2}u^{2}\|_{L^{2}}.
 \end{split}
\end{equation*}
 This together with $\dive u=0$ ensures that
 \beno
 \mid\bigl(\pa_2u\cdot\na u^2 | \pa_2u^2\bigr)\mid
\lesssim \| \partial_{1}\partial_{2}u\|_{L^{2}}\| \partial_{1}u\|_{L^{2}}\| \partial_{2}u\|_{L^{2}}.
\eeno
Along with \eqref{S3eq5}, we achieve
\begin{equation*}
\mid(\partial_{2}(u\cdot \nabla u )\mid \partial_{2}u)\mid \lesssim \| \partial_{1}\partial_{2}u\|_{L^{2}}\| \partial_{1}u\|_{L^{2}}\| \partial_{2}u\|_{L^{2}}
\end{equation*}
 Applying Young's inequality yields
$$\mid(\partial_{2}(u\cdot \nabla u )\mid \partial_{2}u)\mid\leq \frac{1}{2}\|\partial_{1}\partial_{2}u\|_{L^{2}}^{2}+C\| \partial_{1}u\|_{L^{2}}^{2}\| \partial_{2}u\|_{L^{2}}^{2}.$$
Inserting the above inequality into \eqref{S3eq3} gives rise to
$$\frac{d}{dt}\| \partial_{2}u(t) \| _{L^{2}}^{2}
+\| \partial_{1}\partial_{2}u \|_{L^{2}}^{2}\leq
2C\| \partial_{1}u\|_{L^{2}}^{2}\| \partial_{2}u\|_{L^{2}}^{2}.$$
Applying Gronwall's inequality and using \eqref{S3eq1}, we obtain
\begin{equation*}
\begin{split}
\| \partial_{2}u(t) \| _{L^{2}}^{2}+\int_{0}^{t}\| \partial_{1}\partial_{2}u(t) \|_{L^{2}}^{2}ds&\leq
e^{2C\int_{0}^{t}\| \partial_{1}u\|_{L^{2}}^{2}ds}\| \partial_{2}u_0 \| _{L^{2}}^{2}\\
&\leq
e^{2C\| u_0\|_{L^{2}}^{2}}\| \partial_{2}u_0 \| _{L^{2}}^{2},
\end{split}
\end{equation*}
which yields \eqref{S3eq2}.

It remains to prove \eqref{S3eq6}. We only present the proof to the first one, the second one follows along the
same line. Indeed observing that
\beno
\begin{split}
f^2(x_1,x_2)=\int_{-\infty}^{x_1}\pa_y f^2(y,x_2)\,dy=&2\int_{-\infty}^{x_1}f(y,x_2)\pa_y f(y,x_2)\,dy\\
\leq& 2\|f(\cdot,x_2)\|_{L^2_\h}\|\pa_{x_1}f(\cdot,x_2)\|_{L^2_\h},
\end{split}
\eeno
which implies
\beno
\|f(\cdot,x_2)\|_{L^\infty_\h}^2\leq 2\|f(\cdot,x_2)\|_{L^2_\h}\|\pa_{x_1}f(\cdot,x_2)\|_{L^2_\h}.
\eeno
Applying H\"older's inequality in the $x_2$ variable gives
\beno
\|f\|_{L^2_\v(L^\infty_\h)}^2\leq 2\|f\|_{L^2}\|\pa_{x_1}f\|_{L^2}.
\eeno
This completes the proof of the lemma.
\end{proof}

Let us now present the proof of Theorem \ref{thm1}.

\begin{proof}[Proof of Theorem \ref{thm1}] We divide the proof of this theorem to the following two parts:

(1) Existence part. It is standard that the first step to prove the existence of weak solutions to some nonlinear partial
differential equations is to construct appropriate approximate solutions. Here we
consider
\begin{equation}\label{S3eq7}
\left\{
\begin{aligned}
& \partial_{t}u^{\epsilon}+u^{\epsilon}\cdot\nabla u^{\epsilon}-\partial_{1}^{2}u^{\epsilon}-\epsilon^{2}\partial_{2}^{2}u^{\epsilon}
=-\nabla p^{\epsilon} \\
&\dive u^{\epsilon}=0 \\
&u^{\epsilon}(0)=u_{0}\ast j_{\epsilon}
\end{aligned}
\right .,
\end{equation}
 where $j$ is a smooth function on $\mathbb{R}^{2}$ with
 $$j(x)=1,\text{  } \mid x \mid\leq 1 ;  \text{     }j(x)=0,\text{  } \mid x \mid\geq 2,$$
 and
 $$j_{\epsilon}(x)=\frac{1}{\epsilon^{2}}j(\frac{x}{\epsilon}).$$
It follows from classical theory on Navier-Stokes system that \eqref{S3eq7} has a unique global smooth solution $(u^\epsilon, p^\epsilon)$ for any fixed $\epsilon.$
Furthermore, along the same line to the proof of Lemmas \ref{S3lem1} and \ref{S3lem2}, we have
\beq \label{S3eq8}
\begin{split}
&\| u^\epsilon(t) \|_{L^{2}}^{2}+2\int_{0}^t\| \partial_{1}u^\epsilon \|_{L^{2}}^{2}ds+\epsilon^2\int_{0}^t\|\pa_2u^\epsilon \|_{L^{2}}^2ds\leq \| u_0\| _{L^{2}}^{2},\\
&\|\partial_{2}u^\epsilon(t)\|_{L^{2}}^{2}+\int_{0}^t\|\partial_{1}\partial_{2}u^\epsilon\|_{L^{2}}^{2}ds+\epsilon^2\int_{0}^t\|\partial_{2}^2u^\epsilon\|_{L^{2}}^{2}ds
\leq
\|\partial_{2}u_0\|_{L^{2}}^{2}e^{C\| u_0\|_{L^{2}}^{2}}.
\end{split}
\eeq
It is obvious that for $\varphi\in C_{0}^{\infty}$ with $\dive\varphi=0$, $u^{\epsilon}$ satisfies the following equation:
\begin{equation}\label{S3eq9}
\int_{0}^{t}\bigl(-( u^{\epsilon},\partial_{t}\varphi )+ ( u^{\epsilon}\cdot\nabla u^{\epsilon}, \varphi)+(\partial_{1}u^{\epsilon},\partial_{1}\varphi)
+\epsilon^{2}(\partial_{2}u^{\epsilon},\partial_{2}\varphi)\bigr)\,ds
=0
\end{equation}
Then for any fixed $T>0$,
$\bigl\{u^{\epsilon}\bigr\}_{\epsilon>0}$ is uniformly bounded in $L^{\infty}([0,T]; H^{0,1})\cap L^{2}([0,T]; H^{1,1})$.
By  interpolation, $\bigl\{u^{\epsilon}\bigr\}_{\epsilon>0}$ is uniformly bounded in $L^{4}([0,T];H^{\frac{1}{2}})$. Sobolev imbedding implies that
 $\bigl\{u^{\epsilon}\bigr\}_{\epsilon>0}$ is bounded in $L^{4}([0,T];L^{4})$. Hence the nonlinear term in \eqref{S3eq7} is bounded in $L^{2}([0,T];H^{-1})$. Moreover, $\nabla p^\epsilon=\nabla \Delta^{-1}\partial_{i}\partial_{j}\left((u^{\epsilon})^i(u^\epsilon)^{j}\right)$ is uniformly bounded in $L^{2}([0,T];H^{-1})$. As
 a result, it comes out that
     \beq \label{S3eq10}
     \bigl\{\pa_t u^{\epsilon}\bigr\}_{\epsilon>0}\quad\mbox{ is  uniformly bounded in}\  L^{2}([0,T];H^{-1}).
     \eeq

At this stage, we need to use the following Aubin-Lions lemma:

\begin{lem}[Aubin-Lions ]
{\sl Let $K$ be the torus or a smooth bounded domain. If the sequence $\left(u_{n}\right)_{n\in\N}$ is uniformly
 bounded sequence in $L^{q}([0,T];H^{1}(K))$ for $q\in (1,\infty)$,
and $\left(\partial_{t}u_{n}\right)_{n\in\N}$ is a uniformly bounded sequence in $L^{p}([0,T];H^{1}(K))$ for some $p\in (1,\infty)$,
then there exist $u\in L^{q}([0,T]; H^1(K))$ and a subsequence of $\left(u_{n_j}\right)_{j\in\N}$ so that  $\left(u_{n_j}\right)_{j\in\N}$
 converges strongly to $u$ in $L^{q}([0,T];L^{2}(K))$.}
\end{lem}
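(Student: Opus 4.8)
\emph{Proof proposal.} This is the classical Aubin--Lions--Simon compactness lemma, and the plan is to derive it from the compactness of the Rellich embedding $H^1(K)\hookrightarrow L^2(K)$ together with an equicontinuity-in-time estimate produced by the bound on $(\partial_t u_n)_n$. Set $X:=H^1(K)$, $B:=L^2(K)$ and $Y:=H^{-1}(K)$; then $X\hookrightarrow\hookrightarrow B$ is a \emph{compact} embedding (Rellich--Kondrachov, valid on the torus and on bounded smooth domains), while $B\hookrightarrow Y$ is continuous and injective. Since $H^1(K)\hookrightarrow H^{-1}(K)$ continuously, the hypothesis on $(\partial_t u_n)_n$ in particular yields a uniform bound in $L^p([0,T];Y)$, which is all that will be used.

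I would first record two elementary ingredients. \emph{(a) Ehrling's lemma:} for every $\eta>0$ there is $C_\eta>0$ with $\|v\|_B\le\eta\|v\|_X+C_\eta\|v\|_Y$ for all $v\in X$; this follows from $X\hookrightarrow\hookrightarrow B\hookrightarrow Y$ by a standard compactness--contradiction argument. \emph{(b) Smallness of time translations in the weak norm:} for $0<h<T$, writing $u_n(t+h)-u_n(t)=\int_t^{t+h}\partial_tu_n(s)\,ds$ in $Y$ and using H\"older's inequality first in $s$ and then in $t$,
\[
\sup_n\big\|u_n(\cdot+h)-u_n\big\|_{L^q([0,T-h];Y)}\le T^{1/q}\,h^{1-1/p}\,\sup_n\|\partial_tu_n\|_{L^p([0,T];Y)}\ \longrightarrow\ 0\quad(h\to0^+).
\]

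Then I would combine (a) and (b) to reach strong compactness in the Bochner space $L^q([0,T];B)=L^q([0,T];L^2(K))$, verifying the hypotheses of the Fr\'echet--Kolmogorov (Riesz--Kolmogorov) criterion in that space. For the \emph{equicontinuity of translations in $B$}: applying Ehrling pointwise in $t$ and then taking $L^q$ norms,
\[
\sup_n\big\|u_n(\cdot+h)-u_n\big\|_{L^q([0,T-h];B)}\le 2\eta\,\sup_n\|u_n\|_{L^q([0,T];X)}+C_\eta\,\sup_n\big\|u_n(\cdot+h)-u_n\big\|_{L^q([0,T-h];Y)},
\]
which is made uniformly small by choosing first $\eta$ small (using the bound in $L^q([0,T];X)$) and then $h$ small (using (b)). For the \emph{compactness of local time-averages}: for $0<t_1<t_2<T$ the integrals $\int_{t_1}^{t_2}u_n(s)\,ds$ are bounded in $X$ by H\"older, hence relatively compact in $B$ because $X\hookrightarrow\hookrightarrow B$. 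Equivalently, one may run the self-contained Steklov-averaging argument: with $u_n^h(t):=\tfrac1h\int_t^{t+h}u_n(s)\,ds$, for fixed $h$ the family $(u_n^h)_n$ is bounded in $C([0,T-h];X)$ and (its $Y$-valued time derivative being bounded) equicontinuous into $Y$, hence into $B$ by Ehrling, hence relatively compact in $C([0,T-h];B)$ by Arzel\`a--Ascoli; meanwhile $u_n^h\to u_n$ in $L^q([0,T'];B)$ uniformly in $n$ as $h\to0^+$ for each $T'<T$, again by (a) and (b). Either way, $(u_n)_n$ is relatively compact in $L^q([0,T];L^2(K))$, so a subsequence $(u_{n_j})_j$ converges strongly there to some $u$; since $(u_n)_n$ is bounded in $L^q([0,T];H^1(K))$, a further weakly convergent subsequence identifies the limit, giving $u\in L^q([0,T];H^1(K))$.

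The step I expect to be the genuine obstacle is precisely this last combination: upgrading the $L^q([0,T];X)$ bound plus the equismall $Y$-translations of (b) to \emph{strong} compactness in $L^q([0,T];B)$. A direct Fr\'echet--Kolmogorov argument on $L^q([0,T];L^2(K))$ does not by itself deliver compactness in the space variable; it is the interplay of Rellich's compact embedding with the time averaging (Ehrling together with (b)) that supplies it, and this is exactly the heart of the Aubin--Lions lemma. The remaining ingredients---Rellich--Kondrachov, Ehrling's lemma, and the fundamental-theorem-of-calculus estimate for time increments---are routine.
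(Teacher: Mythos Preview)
The paper does not prove this lemma at all: it is stated as the classical Aubin--Lions lemma and used as a black box in the existence argument (immediately after the statement the authors write ``Let us now take $\epsilon=\frac1n$\ldots'' and apply it to the approximate solutions). So there is no proof in the paper to compare your proposal against.

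That said, your outline is a correct sketch of one standard route to Aubin--Lions--Simon: set up the triple $X\hookrightarrow\hookrightarrow B\hookrightarrow Y$, use Ehrling's interpolation inequality, control time translations in $Y$ via the fundamental theorem of calculus and the $L^p$ bound on $\partial_t u_n$, and then upgrade to strong compactness in $L^q([0,T];B)$ by a Fr\'echet--Kolmogorov/Steklov-averaging argument. You also correctly observe that only the $L^p([0,T];H^{-1})$ bound on $\partial_t u_n$ is actually needed (indeed, the paper's stated hypothesis with $H^1$ on the time derivative appears to be a typo, since in the application \eqref{S3eq10} only gives $\partial_t u^\epsilon\in L^2([0,T];H^{-1})$). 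Your final identification of the limit in $L^q([0,T];H^1(K))$ by weak compactness is also fine.
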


Let us now take $\epsilon=\frac1n$ in \eqref{S3eq7}. Set $u_{n}=u^{\frac{1}{n}}$.\\
 (i) For torus $\mathbb{T}^{2}$ case,  given any $T>0,$ it follows from \eqref{S3eq9}, \eqref{S3eq10} and  Aubin-Lions Lemma that there is a subsequence, which
 we still denote by $\bigl\{u_n\bigr\}_{n\in\N}$ and some $u\in L^{\infty}([0,T]; H^{0,1})\cap L^{2}([0,T]; H^{1})$  so that
 \beq \label{S3eq11}
 u_{n}\rightarrow u \text{ } \text{strongly in}\text{ } L^{2}([0,T];L^{2}(\mathbb{T}^{2})) \text{ as }
\text{ }n \rightarrow \infty.  \eeq
Through a diagonal process with respect to $T,$ we can choose a subsequence, $\bigl\{u_n\bigr\}_{n\in\N}, $ so that \eqref{S3eq11} holds for any $T>0.$
Then we can pass the limit in \eqref{S3eq9} to obtain (\ref{S2eq1}).

(ii) For the case that $D=\mathbb{R}^{2}$, we choose a sequence of compact sets $(K_{i})$, such that $K_{i}\subset K_{i+1}$, and $\bigcup\limits_{i=1}^{\infty}K_{i}=\mathbb{R}^{2}$. By a classical diagonal methods, we can choose a subsequence of $\left(u_{n}\right)_{n\in\N}$
(which we still denote by  $\left(u_{n}\right)_{n\in\N}$ for simplicity) so that
$$u_{n}\rightarrow u \text{ } \text{strongly in}\text{ } L^{2}([0,T];L^{2}(K_{i}))
\text{ for any } i .$$
Since the test function $\varphi$ in (2.1) satisfies $\varphi\in C_{0}^{\infty}(D)$, it must be supported in some $K_{i}$.
Then as in case (i), we can take $n\to\infty$ in \eqref{S3eq9} to  obtain (\ref{S2eq1}).\\
Finally notice that since $u^{\epsilon}$ is uniformly bounded in $L^{\infty}(\R^+; H^{0,1})\cap L^{2}(\R^+; \dH^{1,1})$,  we can choose a subsequence of $u_{n}$ (which we denote by
$u_{n}$ again) and some $\tilde{u}$, such that \\
$u_{n}\rightarrow \tilde{u}$  weakly in $L^{2}([0,T]; H^{1,1})$ for each $T>0$, and \\
$u_{n}\rightarrow \tilde{u}$ weakly star in $L^{\infty}([0,T]; H^{0,1})$ for each $T>0$.\\
By the uniqueness of the limits of weak convergence, $u$ and $\tilde{u}$ coincide. \\
Since $u_{n}$ is uniformly bounded in $L^{\infty}(\R^+; H^{0,1})\cap L^{2}(\R^+; \dH^{1,1})$,
we have
$$\parallel u \parallel_{L^{\infty}(\R^+; H^{0,1})}=\lim\limits_{T\rightarrow \infty}\parallel u \parallel_{L^{\infty}([0,T]; H^{0,1})}\leq \sup\limits_{n}\parallel u_{n}\parallel_{L^{\infty}(\R^+; H^{0,1})},$$
and
$$\parallel u \parallel_{L^{2}(\R^+; \dH^{1,1})}=\lim\limits_{T\rightarrow \infty}\parallel u \parallel_{L^{2}([0,T]; \dH^{1,1})}\leq \sup\limits_{n}\parallel u_{n}\parallel_{L^{2}(\R^+; \dH^{1,1})}.$$

Thus we actually have $u\in L^{\infty}(\R^+; H^{0,1})\cap L^{2}(\R^+; \dH^{1,1})$.

(2) Uniqueness part.
Let  $u,v\in  L^{\infty}(\R^+, H^{0,1})\cap L^{2}(\R^+,\dH^{1,1})$ be two  weak solutions of \eqref{S1eq1}. We denote
 $w:=u-v$.
Then we have
$$\partial_{t}w+w\cdot \nabla v+u\cdot \nabla w-\partial_{1}^{2}w=-\nabla p.$$
Taking  $L^{2}$ inner product of the above equation with $w$ gives
\beq \label{S3eq12}
\frac{1}{2}\frac{d}{dt}\| w(t) \| _{L^{2}}^{2}+\| \partial_{1}w \|_{L^{2}}^{2}\leq \mid(w\cdot \nabla v\mid w)\mid.  \eeq
Observing that
\begin{equation}\label{S3eq30}
\begin{split}
\mid(w\cdot \nabla v\mid w)\mid
=&\mid(w^{1}\partial_{1}v+w^{2}\partial_{2}v\mid w)\mid\\
\leq &\bigl(\| w^{1}\|_{L_{\h}^{\infty}(L_{\v}^{2})}\|\partial_{1}v\|_{L_{\h}^{2}(L_{\v}^{\infty})}
  + \| w^{2}\|_{L_{\h}^{2}(L_{\v}^{\infty})} \|\partial_{2}v\|_{L_{\h}^{\infty}(L_{\v}^{2})}\bigr)\| w \|_{L^{2}},
  \end{split}
\end{equation}
where $w=(w^{1},w^{2})$,
from which and \eqref{S3eq6}, we deduce that
\begin{equation}\label{S3eq31}
\begin{split}
\mid(w\cdot \nabla v\mid w)\mid
 \lesssim& \bigl( \| w \|_{L^{2}}^{\frac{1}{2}}\| \partial_{1}w \|_{L^{2}}^{\frac{1}{2}}
\| \partial_{1}v \|_{L^{2}}^{\frac{1}{2}}\| \partial_{1}\partial_{2}v \|_{L^{2}}^{\frac{1}{2}}\\
&+\| w \|_{L^{2}}^{\frac{1}{2}}\| \partial_{1}w \|_{L^{2}}^{\frac{1}{2}}
\| \partial_{2}v \|_{L^{2}}^{\frac{1}{2}}\| \partial_{1}\partial_{2}v \|_{L^{2}}^{\frac{1}{2}}\bigr)\| w \|_{L^{2}}.
\end{split}
\end{equation}
Applying Young's inequality and using the divergence free condition $\partial_{2}w^{2}=-\partial_{1}w^{1}$ we have
\begin{equation*}
\begin{split}
\mid(w\cdot \nabla v\mid w)\mid
\leq \frac{1}{2}\| \partial_{1}w \|_{L^{2}}^{2}+C_{0}\bigl(&\| \partial_{1}v \|_{L^{2}}^{\frac{2}{3}}\| \partial_{1}\partial_{2}v \|_{L^{2}}^{\frac{2}{3}}
+\| \partial_{2}v \|_{L^{2}}^{\frac{2}{3}}\| \partial_{1}\partial_{2}v \|_{L^{2}}^{\frac{2}{3}}\bigr)
\| w \|_{L^{2}}^{2}.
\end{split}
\end{equation*}
Inserting the above inequality into \eqref{S3eq12} and applying
Gronwall's inequality we obtain
\begin{equation*}
\| w(t) \|_{L^{2}}^{2}\leq \| w_0 \| _{L^{2}}^{2}e^{2C_{0}\int_{0}^{t}\bigl(\| \partial_{1}v \|_{L^{2}}^{\frac{2}{3}}\| \partial_{1}\partial_{2}v \|_{L^{2}}^{\frac{2}{3}}
+\| \partial_{2}v \|_{L^{2}}^{\frac{2}{3}}\| \partial_{1}\partial_{2}v \|_{L^{2}}^{\frac{2}{3}}\bigr)\,ds}.
\end{equation*}
This along with the fact that $\| \partial_{1}v \|_{L^{2}}^{\frac{2}{3}}\| \partial_{1}\partial_{2}v \|_{L^{2}}^{\frac{2}{3}}
+\| \partial_{2}v \|_{L^{2}}^{\frac{2}{3}}\| \partial_{1}\partial_{2}v \|_{L^{2}}^{\frac{2}{3}}
$ belongs to $L^{1}([0,T])$ ensures
  $w(t)=0$, that is
 $u=v$. This completes the uniqueness part of the theorem.
\end{proof}

\setcounter{equation}{0}
\section{The Stochastic Case}\label{Sect4}
For the stochastic case, we consider the equation (1.2) on $\mathbb{T}^{2}$,
 and  again for simplicity of the notation, we always omit the domain $\mathbb{T}^{2}$ in what follows.
\subsection{Prelimaries and notations}
Let $(e_{k},k \geq1)$ be an orthonormal basis of $H$ whose elements belong to $H^{2}$ and orthogonal in $\tilde{H}^{0,1}$. For integers $k,l\geq 1$ with $k\neq l$, we deduce that
$$(\partial_{2}^{2}e_{k},e_{l})=-(\partial_{2}e_{k},\partial_{2}e_{l})=0.$$
Therefore, $\partial_{2}^{2}e_{k}$ is a constant multiple of $e_{k}$.

Let $\mathcal{H}_{n}=span(e_{1},,..,e_{n})$ and let $P_{n}$(resp. $\tilde{P}_{n}$ ) denote the orthogonal projection from $H$ (resp. $\tilde{H}^{0,1}$) to $\mathcal{H}_{n}$. We deduce that
$$ \text{   }P_{n}u=\tilde{P}_{n}u ,\text{   } \text{for}\text{   } u\in \tilde{H}^{0,1} .$$
Indeed, for $v\in \mathcal{H}_{n}$, we have
$\partial_{2}^{2}v\in \mathcal{H}_{n}$ and for any $u\in \tilde{H}^{0,1}$ :
$$(P_{n}u, v)=(u,v) \text{ and }\text{   } (\partial_{2}P_{n}u, \partial_{2}v)=-(P_{n}u,\partial_{2}^{2}v)=-(u,\partial_{2}^{2}v)=(\partial_{2}u,\partial_{2}v).
$$
Hence given $u\in \tilde{H}^{0,1}$, we have
$$(P_{n}u, v)_{0,1}=(u, v)_{0,1}, \text{   }\text{for any} \text{   } v\in \mathcal{H}_{n}.$$
This proves that $P_{n}$ and $\tilde{P}_{n}$ coincide on $\tilde{H}^{0,1}$.

Let $(W(t),t \geq 0)$ be an $\ell^2$-cylindrical Wiener process on a stochastic basis $(\Omega,\mathcal{F},P)$. Let $W_{n}(t)=\sum \limits_{j=1}^{n}\psi_{j}\beta_{j}(t):=\Pi_{n} W(t)$, where $\{\beta_{j}(t)\}$ is a sequence of independent Brownian Motions on $(\Omega, \mathcal{F}, P)$ and $\psi_{j}$ is an orthonormal basis of $\ell^2$.

Let $L^{2}(\ell^2,\mathbb{U})$ denotes the  Hilbert-Schmidt norms from $\ell^2$ to $\mathbb{U}$ for Hilbert space $\mathbb{U}$. For a Polish space $\mathbb{V}$, let $\mathcal{B}(\mathbb{V})$ denote its Borel $\sigma$-algebra and $\mathcal{P}(\mathbb{V})$ denote all the probability measures on $(\mathbb{V},\mathcal{B}(\mathbb{V}))$.
 Let $\sigma$ be a measurable mapping from $\bigl([0,T]\times \tilde{H}^{1,1},\mathcal{B}([0,T]\times \tilde{H}^{1,1})\bigl)$ to $\bigl(L^2(\ell^2,\tilde{H}^{1,1}),\mathcal{B}(L^{2}(\ell^2,\tilde{H}^{1,1}))\bigl)$.
 Then we introduce probabilistically weak, strong solutions and martingale solutions.
Set
$$F(u):=-B(u)+\partial_{1}^{2}u.$$

\begin{defi}[(Probabilistically) weak solution]
{\sl We say that a pair $(u,W)$ is a (probabilistically) weak solution to (1.2) if there exists a stochastic basis $(\Omega,\mathcal{F},\mathcal{F}_{t},P)$ such that  $u=(u(t))_{t\geq 0} $ is an $(\mathcal{F}_{t})$ adapted process and $W$ is an $\ell^2$-cylindrical Wiener process on $(\Omega,\mathcal{F},\mathcal{F}_{t},P)$ and the following holds:
\begin{itemize}
\item[(i)] $u\in L^{\infty}([0,T],\tilde{H}^{0,1})\cap L^{2}([0,T],\tilde{H}^{1,1}) $ for $a.s.$ $P$ and any $T>0$ ;

\item[(ii)] $\int_{0}^{T}\| F(u(s))\|_{H^{-1}}ds+\int_{0}^{T}\| \sigma(s,u(s))\|_{L^2(\ell^2,H)}^2ds<+\infty$  $a.s.$ $P$, for any $T>0$;

\item[(iii)] For every $l\in C^{1}(\mathbb{T}^{2})$ with  $\dive l =0$, $a.s.P$
$$u(0)=u_0,$$
$$\langle u(t),l\rangle =\langle u_0,l\rangle +\int_{0}^{t}\langle-u\cdot\nabla u+\partial_{1}^{2}u,l\rangle\,ds+\int_{0}^{t}\langle \sigma(s,u(s))dW(s),l\rangle.$$
\end{itemize}
}
\end{defi}
Here $\langle \cdot , \cdot\rangle$ denotes the duality bracket.
$\langle u , v \rangle$ and $(u,v)$ coincide when $u,v\in L^2$.

Now we define the (probabilistically) strong solution of (1.2) and
we fix a stochastic basis $(\Omega, \mathcal{F},P)$ and an $\ell^2$-cylindrical Wiener process $W$ on it.

\begin{defi}[(Probabilistically) strong solution]
{\sl We say that $u$ is a (probabilistically) strong solution to the equation (1.2) on the given probability space $(\Omega, \mathcal{F},P)$ with respect to the fixed cylindrical Wiener process $W$, if it satisfies:
\begin{itemize}
\item[(i)] $u$ is adapted to the filtration $\hat{\mathcal{F}_{t}}:=\sigma\{u_{0}\vee W(s), s\leq t\}$;

\item[(ii)] $u$ satisfies (i),(ii) and (iii) of Definition 4.1.
\end{itemize}}
\end{defi}

Finally we define the martingale solutions.
For any fixed $T>0$, let $\Omega^T:=C([0,T]; H^{-1})$ be the space of all continuous functions from $[0,T]$ to $H^{-1}$. \\
For $0\leq t\leq T$, define the filtration:
$$\mathcal{F}_{t}=\sigma\{x(r):0\leq r\leq t, x\in \Omega^T \}.$$

\begin{defi}[Martingale solution]
{\sl We say that a probability measure $P\in \mathcal{P}(C([0,T]; H^{-1}))$ is called a martingale solution of (1.2) with initial value $u_{0}$ if
\begin{itemize}
\item[(M1)] $P\bigl(u(0)=u_{0},u\in L^{\infty}(0,T; \tilde{H}^{0,1})\cap L^{2}(0,T; \tilde{H}^{1,1})\bigl)=1$, and
$$ P\{u\in C([0,T],H^{-1}):\int_{0}^{T}\| F(u(s))\|_{H^{-1}}ds+\int_{0}^{T}\| \sigma(s,u(s))\|_{L^{2}(\ell^2,H)}^2ds<+\infty\}=1;$$

\item[(M2)] For every $l\in C^{1}(\mathbb{T}^{2})$, the process
$$M_{l}(t,u)=\langle u(t),l \rangle-\int_{0}^{t}\langle F(u(s)),l\rangle ds$$
is a continuous square integrable $\mathcal{F}_{t}-martingale$ with respect to P, whose quadratic variation process is $\int_{0}^{t}\| \sigma^{*}(s,u(s))(l)\|_{\ell^2}^{2}ds$,
where the asterisk denotes the adjoint operator of $\sigma(s,u(s))$;

\item[(M3)] We have
$$E^{P}\Bigl(\sup\limits_{t\in[0,T]}\| u(t)\|_{L^{2}}^{2}+\int_{0}^{T}\| u(t)\|_{H^{1,0}}^{2}dt\Bigr)
\leq C_{T}(1+\| u_{0}\|_{L^{2}}^{2}).$$
\end{itemize}}
\end{defi}
\begin{Remark}

By the above definitions, we know immediately that if $u$ is a (probabilistically) strong solution with respect to the fixed cylindrical Wiener process $W$, $(u,W)$ is a (probabilistically)  weak solution.
Moreover, let $P$ denote the law of $u$ in $C([0,T],H^{-1})$, then $P$ is a martingale solution.

Notice that by martingale representation theorem, (see for example \cite{Da Prato1992Stochastic})  the existence of  martingale  solution can lead to the existence of (probabilistically) weak solution. And the law of the weak solution gives a martingale solution $P$.
\end{Remark}


\begin{defi}[Condition  C]
{\sl The diffusion coefficient $\sigma$ is a measurable mapping from $\bigl([0,T]\times \tilde{H}^{1,1},\mathcal{B}([0,T]\times \tilde{H}^{1,1})\bigl)$ to $\bigl(L^2(\ell^2,\tilde{H}^{1,1}),\mathcal{B}(L^{2}(\ell^2,\tilde{H}^{1,1}))\bigl)$ such that    :
\item[(i)]\textbf{Growth condition}\\
There exist nonnegative constants $K'_{i}$, $K_{i}$ and $\tilde{K}_{i}$ $(i=0,1,2)$ such that for every
$t\in[0,T]$ and $u\in \tilde{H}^{1,1}$ :
\beno
\begin{split}
\| \sigma(t,u) \|_{L^{2}(\ell^2,H^{-1})}^{2}\leq & K'_{0}+K'_{1}\| u \|_{L^{2}}^{2};\\
\| \sigma(t,u) \|_{L^{2}(\ell^2,H)}^{2}\leq &K_{0}+K_{1}\| u \|_{L^{2}}^{2}+K_{2}\| \partial_{1}u \|_{L^{2}}^{2};\\
\| \sigma(t,u) \|_{L^{2}(\ell^2,H^{0,1})}^{2}\leq &\tilde{K}_{0}
+\tilde{K}_{1}\| u \|_{0,1}^{2}+\tilde{K}_{2}(\| \partial_{1}u \|_{L^{2}}^{2}+\| \partial_{2}\partial_{1}u \|_{L^{2}}^{2}).
\end{split}
\eeno
\item[(ii)]\textbf{Lipschitz condition} \\
 There exist constants $L_{1}$ and $L_{2}$ such that for $t\in[0,T]$ and $u,v\in\tilde{H}^{1,1}$:
$$\| \sigma(t,u)- \sigma(t,v) \|_{L^{2}(\ell^2,H)}^{2}\leq
L_{1}\| u-v\|_{L^{2}}^{2}+L_{2}\| \partial_{1}(u-v) \|_{L^{2}}^{2}. $$}
\end{defi}
\begin{Remark}
A typical example of $\sigma$ satisfying Condition (\textbf{C}) is the following:\\
 First we recall the H\"{o}lder space $C^{k+\tau}$ (k is an nonnegative integer and $0\leq \tau<1$) as: $u$ has $k$th derivatives and
$$\| u \|_{C^{k+\tau}}:=\sum\limits_{|\alpha|\leq k}\| D^\alpha u \|
+\sum\limits_{|\alpha|= k}\sup\limits_{x\neq y} \frac{| D^\alpha u(x)- D^\alpha u(y)|}{|x-y|^\tau}<\infty.$$
For $u\in H^{1,1}$ and $y\in \ell^2$, let
$$\sigma(t,u)y=\sum\limits_{k=1}^{\infty}(c_{k}\partial_{1}u+b_{k}g(u))\langle y,\psi_{k}\rangle_{\ell^2},$$
where $\psi_{k}$, as defined in Section 4.1, is the orthonormal basis of $\ell^2$  and $c_{k}\in C^{\rho}$, $\sum\limits_{k=1}^{\infty}\|c_{k}(\xi)\|_{C^{\rho}}^{2}\leq M_{1}$ for some $\rho>2$,
$b_{k}\in L^{\infty}$, $\partial_2b_{k}\in L^{\infty}$, $\sum\limits_{k=1}^{\infty}|b_{k}(\xi)|^2\leq M_{2}$, and $\sum\limits_{k=1}^{\infty}|\partial_{2}b_{k}(\xi)|^2\leq M_{2}$, $\xi\in \mathbb{T}^{2}$.
We also assume that $\|g\|_{C^{1}}\leq C(g).$
 Here $C^{\rho}$ and $C^{1}$  are the H\"{o}lder spaces.
 And suppose that $\dive(c_{k}\partial_{1}u+b_{k}g(u))=0 $.
Then we have
\beno
\begin{split}
\| \sigma(t,u) \|_{L^{2}(\ell^2,H^{-1})}\leq& \bigl(\sqrt{M_{1}}+\sqrt{M_{2}}C(g)\bigl)\| u \|_{L^{2}}+\sqrt{M_{2}}C(g);\\
\| \sigma(t,u) \|_{L^{2}(\ell^2,H)}\leq& \sqrt{M_{1}}\| \partial_{1}u \|_{L^{2}}+\sqrt{M_{2}}\bigl(C(g)\| u \|_{L^{2}}+C(g)\bigl);\\
\| \sigma(t,u) \|_{L^{2}(\ell^2,H^{0,1})}\leq &  \sqrt{M_{1}}\| \partial_{1}u \|_{L^{2}}+\sqrt{M_{2}}\bigl(C(g)\| u \|_{L^{2}}+C(g)\bigl)\\
&+(\sum\limits_{k=1}^{\infty}\|\partial_{2}(c_{k}\partial_{1}u)\|_{L^{2}}^{2})^{\frac{1}{2}}
+(\sum\limits_{k=1}^{\infty}\|\partial_{2}(b_{k}g(u))\|_{L^{2}}^{2})^{\frac{1}{2}}\\
\leq &  \sqrt{M_{1}}\| \partial_{1}u \|_{L^{2}}+\sqrt{M_{2}}\bigl(C(g)\| u \|_{L^{2}}+C(g)\bigl)\\
&+\sqrt{M_{1}}(\| \partial_{1}u \|_{L^{2}}+\| \partial_{1}\partial_{2}u \|_{L^{2}})
+\sqrt{M_{2}}C(g)(\| u \|_{L^{2}}+\| \partial_{2}u \|_{L^{2}});\\
\| \sigma(t,u)- \sigma(t,v) \|_{L^{2}(\ell^2,H)}\leq & \sqrt{M_{1}}\| \partial_{1}(u-v) \|_{L^{2}}+\sqrt{M_{2}}C(g)\|u-v \|_{L^{2}},
\end{split}
\eeno
where the first inequality above is due to (2) in page 140, section 2.8.2
\cite{Triebel1983}.

\end{Remark}

\subsection{Main theorems of stochastic cases} In this section we state two theorems about the well-posedness of equation (1.2), which will be proved in the following sections.
\begin{thm}\label{thm2}
{\sl Assume that $u_{0}$ is a random variable in $L^{4}(\Omega,\tilde{H}^{0,1})$ and suppose that $\sigma$ satisfies condition (\textbf{C}) with $K_{2}<\frac{2}{11}$ and
$\tilde{K}_{2}<\frac{2}{5}$. Then (1.2) has a global martingale solution.}
\end{thm}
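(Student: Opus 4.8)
The plan is to construct the martingale solution via a Galerkin approximation, derive uniform a priori estimates (in $L^2$ and in $\tilde H^{0,1}$) by It\^o's formula, pass to tightness of the laws on a suitable path space, and finally identify the limit as a martingale solution using the Skorokhod representation theorem and a standard martingale-characterization argument. First I would consider the finite-dimensional system obtained by projecting \eqref{S1eq2} with $P_n$: $du_n = P_n F(u_n)\,dt + P_n\sigma(t,u_n)\,dW_n$, $u_n(0)=P_n u_0$. Since $P_n$ coincides with $\tilde P_n$ on $\tilde H^{0,1}$ (as established in Section 4.1), the projection is well-behaved with respect to both the $L^2$ and the $H^{0,1}$ inner products, which is what makes the $H^{0,1}$ estimate survive the projection. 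By standard SDE theory (locally Lipschitz coefficients, linear growth from Condition C) this system has a unique global strong solution $u_n$ for each $n$.

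The heart of the argument is the pair of uniform estimates. Applying It\^o's formula to $\|u_n(t)\|_{L^2}^2$, using $b(u_n,u_n,u_n)=0$, the dissipation $-2\|\partial_1 u_n\|_{L^2}^2$, the It\^o correction $\|\sigma(t,u_n)\|_{L^2(\ell^2,H)}^2 \le K_0 + K_1\|u_n\|_{L^2}^2 + K_2\|\partial_1 u_n\|_{L^2}^2$, and taking expectations (the stochastic integral is a martingale), the condition $K_2<2$ (in particular $K_2<\tfrac{2}{11}$) lets one absorb the $\|\partial_1 u_n\|_{L^2}^2$ term into the dissipation, and Gronwall gives $E\sup_{[0,T]}\|u_n\|_{L^2}^2 + E\int_0^T\|\partial_1 u_n\|_{L^2}^2\,dt \le C_T(1+\|u_0\|_{L^2}^2)$ after a Burkholder-Davis-Gundy estimate on the sup. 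For the $H^{0,1}$ bound one cannot apply It\^o directly to $\|u_n\|_{0,1}^2$ and take expectations, because the nonlinear term produces $\|\partial_1\partial_2 u_n\|_{L^2}\|\partial_1 u_n\|_{L^2}\|\partial_2 u_n\|_{L^2}$ (exactly as in Lemma \ref{S3lem2}, where the divergence-free cancellations \eqref{S3eq5} are crucial), and after Young's inequality the surviving factor $\|\partial_1 u_n\|_{L^2}^2$ multiplies $\|\partial_2 u_n\|_{L^2}^2$, which is not integrable-in-expectation a priori. The device, as announced in the introduction, is to apply It\^o's formula to $e^{-2c\int_0^t\|\partial_1 u_n\|_{L^2}^2\,ds}\|\partial_2 u_n(t)\|_{L^2}^2$ for a suitable constant $c$; the extra drift term $-2c\|\partial_1 u_n\|_{L^2}^2 e^{-2c\int\cdots}\|\partial_2 u_n\|_{L^2}^2$ exactly cancels the bad quadratic term, the dissipation $\|\partial_1\partial_2 u_n\|_{L^2}^2$ absorbs the other half after Young, and the It\^o correction is controlled by $\tilde K_2(\|\partial_1 u_n\|_{L^2}^2 + \|\partial_1\partial_2 u_n\|_{L^2}^2)$ — here $\tilde K_2 < \tfrac{2}{5}$ is what guarantees enough room to absorb both the correction and the Young leftovers into the $\|\partial_1\partial_2 u_n\|_{L^2}^2$ dissipation (the numerics $\tfrac{2}{11}$ and $\tfrac{2}{5}$ come from bookkeeping the several applications of Young's inequality). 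Taking expectations and using the already-established $L^2$ bound on $\int_0^T\|\partial_1 u_n\|_{L^2}^2$ together with $u_0\in L^4(\Omega,\tilde H^{0,1})$ to control the exponential weight, one obtains $\sup_n E\sup_{[0,T]}\|u_n\|_{0,1}^2 + \sup_n E\int_0^T(\|\partial_1 u_n\|_{L^2}^2+\|\partial_1\partial_2 u_n\|_{L^2}^2)\,dt < \infty$; undoing the weight (which is bounded below on a set of large probability, or by a stopping-time argument) yields the uniform $\tilde H^{0,1}$ bound. One also needs a uniform fractional time-regularity estimate, e.g. $E\|u_n\|_{W^{\alpha,p}([0,T];H^{-1})}^p \le C$ for small $\alpha$, which follows from the equation in mild/weak form, the bounds just obtained, and Condition C(i) on $\|\sigma\|_{L^2(\ell^2,H^{-1})}$.

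With these bounds in hand, the laws $\{\mathcal L(u_n)\}$ are tight on $C([0,T];H^{-1}) \cap L^2([0,T];H)$ (using the compact embeddings $\tilde H^{1,1}\hookrightarrow\hookrightarrow H \hookrightarrow H^{-1}$ together with the time-regularity, via the Aubin--Lions--Simon criterion), and also tight in the weak-* topology of $L^\infty([0,T];\tilde H^{0,1})$ and the weak topology of $L^2([0,T];\tilde H^{1,1})$. By the Skorokhod (Jakubowski) representation theorem there is a new probability space carrying random variables $\tilde u_n \to \tilde u$ a.s. in these topologies, along with Wiener processes $\tilde W_n$ converging to a cylindrical Wiener process $\tilde W$. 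One then passes to the limit in the weak formulation tested against $l\in C^1(\mathbb T^2)$: the linear terms pass by the strong $L^2_{t,x}$ convergence, the nonlinear term $\int_0^t b(\tilde u_n,\tilde u_n,P_n l)\,ds$ passes because $\tilde u_n\to\tilde u$ strongly in $L^2([0,T];L^2)$ while remaining bounded in $L^2([0,T];\tilde H^{1,1})$ (so $\tilde u_n\otimes\tilde u_n \to \tilde u\otimes\tilde u$ in $L^1_{t,x}$), and the stochastic term converges by the standard martingale/quadratic-variation identification argument (lower-semicontinuity gives the limiting process is a martingale with the right quadratic variation $\int_0^t\|\sigma^*(s,\tilde u)(l)\|_{\ell^2}^2\,ds$, using continuity/measurability of $\sigma$ and the uniform bounds to justify the limit of the corrector). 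Finally (M1) and (M3) follow from Fatou/lower-semicontinuity applied to the uniform estimates, so the law of $\tilde u$ on $C([0,T];H^{-1})$ is the desired martingale solution.

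I expect the main obstacle to be the $\tilde H^{0,1}$ uniform estimate: getting the exponential-weight trick to close requires carefully tracking the It\^o correction term $\|\sigma\|_{L^2(\ell^2,H^{0,1})}^2$ (which involves $\|\partial_1\partial_2 u_n\|_{L^2}^2$, competing for the same dissipation as the nonlinear-term leftovers), verifying that the weight $e^{-2c\int_0^t\|\partial_1 u_n\|^2}$ does not destroy integrability — which is exactly where the $L^4(\Omega)$ assumption on $u_0$ and the $L^2$ estimate on $\int\|\partial_1 u_n\|_{L^2}^2$ enter — and then removing the weight to recover an honest bound on $E\sup_{[0,T]}\|u_n\|_{0,1}^2$. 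The divergence-free cancellations from Lemma \ref{S3lem2} must also be checked to survive the Galerkin projection, which is precisely why the basis $(e_k)$ was chosen orthogonal in $\tilde H^{0,1}$ with $\partial_2^2 e_k \in \mathcal H_n$.
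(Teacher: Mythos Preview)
Your overall strategy matches the paper's: Galerkin approximation, $L^2$ energy estimate, exponential-weight $H^{0,1}$ estimate, tightness in $C([0,T];H^{-1})\cap L^2([0,T];H)\cap L^2_w([0,T];H^{1,1})\cap L^\infty_{w^*}([0,T];H^{0,1})$, Jakubowski--Skorokhod, and martingale identification. Two points, however, are misstated and would cause trouble if you tried to execute them as written.

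First, the weight cannot be ``undone'' in expectation. The paper never proves $\sup_n E\sup_{[0,T]}\|u_n\|_{H^{0,1}}^2<\infty$, and there is no obvious way to do so: $\int_0^T\|\partial_1 u_n\|_{L^2}^2\,dt$ is only bounded in $L^1(\Omega)$, so $e^{h(T)}$ has no moment control. Instead the paper carries the weighted quantities $e^{-h(t)}\|u_n(t)\|_{H^{0,1}}^2$ and $\int_0^T e^{-h(t)}\|u_n(t)\|_{H^{1,1}}^2\,dt$ directly into the definition of the tight set $K_R$, and combines them with the unweighted probability bound $P\bigl(\int_0^T\|\partial_1 u_n\|_{L^2}^2\,dt> R_0/4\bigr)<\epsilon/4$ (Chebyshev on the $L^2$ estimate) to conclude that on $K_R$ the unweighted $H^{0,1}$ and $H^{1,1}$ norms are bounded by a deterministic constant. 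Your parenthetical ``bounded below on a set of large probability'' is the correct mechanism, but the claim $\sup_n E\sup\|u_n\|_{0,1}^2<\infty$ should not appear.

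Second, the $L^4(\Omega)$ hypothesis on $u_0$ is not used to control the exponential weight; the weighted $H^{0,1}$ estimate only needs $E\|u_0\|_{H^{0,1}}^2$. Rather, $u_0\in L^4(\Omega,H)$ is used for a separate $L^4(\Omega;L^\infty_tL^2)$ a priori estimate (the paper's Lemma~4.2), which feeds (i) Kolmogorov's continuity criterion with exponent $2p=4$ to obtain the $C^{1/8}([0,T];H^{-1})$ bound in the tight set, and (ii) the uniform bound $\sup_n E|M_l(t,\tilde u_n)|^4<\infty$ giving uniform integrability in the passage to the limit for (M2). Relatedly, the constraint $K_2<\tfrac{2}{11}$ comes from this $L^4$ estimate (one needs $6K_2+\tfrac{16}{\gamma}K_2<4$ for some $\gamma\in(0,1)$), not from the $L^2$ estimate, which after BDG only requires $K_2<\tfrac{2}{5}$.
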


\begin{thm}[Pathwise uniqueness]\label{thm3}
{\sl Assume that $u_{0}$ is a random variable in $L^{4}(\Omega,\tilde{H}^{0,1})$. Suppose that $\sigma$ satisfies condition (\textbf{C}) with $K_{2}<\frac{2}{11}$,
$\tilde{K}_{2}<\frac{2}{5}$ and $L_{2}<\frac{2}{5}$. If $u,v$ are two weak solutions on the same stochastic basis $(\Omega, \mathcal{F},P)$. Then we have $u=v$     $P- a.s.$}
\end{thm}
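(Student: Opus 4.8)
The plan is to estimate the $L^2$ norm of the difference of the two solutions and to close the estimate by a stochastic Gronwall argument, essentially reproducing the deterministic uniqueness proof of Theorem \ref{thm1} with an extra It\^o correction from the noise. Set $w:=u-v$. Since $u$ and $v$ are weak solutions on the same stochastic basis, driven by the same $W$ and with the same initial datum $u_0$, subtracting the two copies of \eqref{S1eq2} and using the algebraic identity $u\cdot\nabla u-v\cdot\nabla v=u\cdot\nabla w+w\cdot\nabla v$ gives
\[ dw+\bigl(u\cdot\nabla w+w\cdot\nabla v-\partial_1^2 w\bigr)\,dt=\bigl(\sigma(t,u)-\sigma(t,v)\bigr)\,dW-\nabla(p_u-p_v)\,dt,\qquad w(0)=0. \]
A weak solution lies a.s. in $L^\infty([0,T];\tilde H^{0,1})\cap L^2([0,T];\tilde H^{1,1})$, and since $\tilde H^{1,1}\hookrightarrow H^1$ this forces $u,v\in L^2([0,T];H^1)$ a.s.; writing the convection term as $\dive(u\otimes u-v\otimes v)$ with $u\otimes u\in L^2([0,T];L^2)$ by the two dimensional Ladyzhenskaya inequality shows it lies in $L^2([0,T];H^{-1})$, while $\sigma(t,u)-\sigma(t,v)$ is Hilbert--Schmidt from $\ell^2$ into $H$. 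Hence, after applying the Leray projection $\textbf{P}$, one is in the variational framework $V\subset H\subset V'$ (with $V=H^1\cap H$) and may apply the It\^o formula for $\|w(t)\|_{L^2}^2$.

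Next I would run the energy estimate. Using $(u\cdot\nabla w\mid w)=0$, which holds because $\dive u=0$, It\^o's formula yields
\[ d\|w\|_{L^2}^2+2\|\partial_1 w\|_{L^2}^2\,dt=-2(w\cdot\nabla v\mid w)\,dt+\|\sigma(t,u)-\sigma(t,v)\|_{L^2(\ell^2,H)}^2\,dt+2\bigl(w\mid(\sigma(t,u)-\sigma(t,v))\,dW\bigr). \]
For the nonlinear term I would reuse verbatim the deterministic computation \eqref{S3eq30}--\eqref{S3eq31}: the anisotropic estimates \eqref{S3eq6}, the divergence-free relation $\partial_2 w^2=-\partial_1 w^1$, and Young's inequality give, for any $\delta>0$,
\[ 2\bigl|(w\cdot\nabla v\mid w)\bigr|\le 2\delta\|\partial_1 w\|_{L^2}^2+C_\delta\Bigl(\|\partial_1 v\|_{L^2}^{\frac{2}{3}}\|\partial_1\partial_2 v\|_{L^2}^{\frac{2}{3}}+\|\partial_2 v\|_{L^2}^{\frac{2}{3}}\|\partial_1\partial_2 v\|_{L^2}^{\frac{2}{3}}\Bigr)\|w\|_{L^2}^2. \]
For the It\^o correction I would invoke the Lipschitz part of Condition (\textbf{C}), namely $\|\sigma(t,u)-\sigma(t,v)\|_{L^2(\ell^2,H)}^2\le L_1\|w\|_{L^2}^2+L_2\|\partial_1 w\|_{L^2}^2$. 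Choosing $\delta$ with $2\delta+L_2<2$, which is possible precisely because $L_2<\tfrac{2}{5}<1$, the two contributions proportional to $\|\partial_1 w\|_{L^2}^2$ are absorbed by the dissipative term on the left, leaving
\[ d\|w\|_{L^2}^2\le g(t)\,\|w\|_{L^2}^2\,dt+dM_t,\qquad g(t):=C_\delta\Bigl(\|\partial_1 v\|_{L^2}^{\frac{2}{3}}\|\partial_1\partial_2 v\|_{L^2}^{\frac{2}{3}}+\|\partial_2 v\|_{L^2}^{\frac{2}{3}}\|\partial_1\partial_2 v\|_{L^2}^{\frac{2}{3}}\Bigr)+L_1, \]
where $M_\cdot:=2\int_0^\cdot(w\mid(\sigma(s,u)-\sigma(s,v))\,dW)$ is a continuous local martingale.

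Finally I would conclude by a localized stochastic Gronwall argument. Since $v\in L^\infty([0,T];\tilde H^{0,1})\cap L^2([0,T];\tilde H^{1,1})$ a.s., H\"older's inequality in time gives $\int_0^T g(s)\,ds<\infty$ a.s.; put $\tau_N:=\inf\{t\le T:\int_0^t g(s)\,ds\ge N\}\wedge T$, intersected with an auxiliary stopping time (e.g. the first time $\|w(t)\|_{L^2}^2+\int_0^t\|\partial_1 w\|_{L^2}^2\,ds$ reaches $N$) so that $M^{\tau_N}$ is a genuine martingale. Applying It\^o's formula to $e^{-\int_0^t g(s)\,ds}\|w(t)\|_{L^2}^2$ cancels the $g\|w\|^2$ term, and evaluating at $t\wedge\tau_N$ and taking expectations gives $\mathbb E\bigl[e^{-\int_0^{t\wedge\tau_N}g}\,\|w(t\wedge\tau_N)\|_{L^2}^2\bigr]\le\mathbb E\,\|w(0)\|_{L^2}^2=0$, hence $w(\cdot\wedge\tau_N)\equiv0$ a.s.; letting $N\to\infty$ (so that $\tau_N\uparrow T$) yields $u=v$ on $[0,T]$, $P$-a.s., and since $T>0$ is arbitrary, $u=v$.

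I expect the main obstacle to be technical rather than analytic: justifying the variational It\^o formula and the stopping-time manipulations for solutions known a priori only to lie in $L^\infty_t\tilde H^{0,1}\cap L^2_t\tilde H^{1,1}$ with no uniform-in-$\omega$ control — that is, checking that $u\cdot\nabla u$ and $\sigma(t,u)$ are admissible data in the abstract It\^o formula, that the truncated stochastic integral $M^{\tau_N}$ is a true (not merely local) martingale, and that the limits in $N$ may be passed; the integrability hypothesis $u_0\in L^4(\Omega,\tilde H^{0,1})$ is what one would use to propagate the moment bounds on $u,v$ needed for this. The purely PDE ingredient, the anisotropic bilinear estimate for $(w\cdot\nabla v\mid w)$, is already available from the proof of Theorem \ref{thm1}, so no new analytic difficulty arises there.
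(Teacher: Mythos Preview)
Your argument is correct and follows the same analytic skeleton as the paper --- subtract, apply It\^o to the $L^2$ norm with an exponential weight coming from the anisotropic bilinear estimate \eqref{S3eq30}--\eqref{S3eq31}, absorb the $\|\partial_1 w\|_{L^2}^2$ terms, and Gronwall --- but the technical implementation differs in two respects worth noting.

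First, the paper does not invoke the abstract variational It\^o formula for $\|w\|_{L^2}^2$. Instead it expands $w$ in the basis $(e_k)$, applies the scalar It\^o formula to each $\varphi_k^2=\langle w,e_k\rangle^2$ multiplied by the weight $e^{-q(t)}$, and then sums over $k$ using dominated convergence; this sidesteps exactly the justification issue you flag at the end. Second, the paper controls the stochastic integral by taking $E\sup_{s\le t}$ and applying the Burkholder--Davis--Gundy inequality rather than localizing with stopping times as you do. The BDG step produces an extra factor $\tfrac{4}{\hat\beta}L_2\|\partial_1 w\|_{L^2}^2$, and it is \emph{this} term that forces the hypothesis $L_2<\tfrac{2}{5}$ (one needs $L_2+2\hat\alpha+\tfrac{4}{\hat\beta}L_2<2$). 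Your stopping-time route avoids this factor entirely, so in fact your argument closes as soon as $2\delta+L_2<2$, i.e.\ merely $L_2<2$; the condition $L_2<\tfrac{2}{5}$ is not sharp for your method. Conversely, the paper's route gives the slightly stronger conclusion $E\sup_t e^{-q(t)}\|w(t)\|_{L^2}^2=0$ directly, without a limiting step in $N$. Either approach is fine; yours is arguably cleaner provided the variational It\^o formula is available, while the paper's basis expansion is more self-contained.
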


\begin{Remark}
By the Yamada-Watanabe theorem, (cf. \cite{liu2015stochastic}) the existence of (probabilistically) weak solution and pathwise uniqueness lead to the existence of the (probabilistically)strong solution.

\end{Remark}
\subsection{Galerkin Approximation and A Priori Estimates.}
From now on we use $C$ to denote the constant which can be different from line to line.
%

Fix $ n\geq 1$ and consider the following stochastic ordinary differential equations on $\mathcal{H}_{n}$ :
$$u_{n}(0)=P_{n}u_{0},$$
and for $t \in [0,T]$ , $v \in \mathcal{H}_{n}$:

\begin{equation}\label{S4eq9} d(u_{n}(t),v)=\langle P_{n}F(u_{n}(t)),v\rangle dt+(P_{n}\sigma (t,u_{n}(t))\Pi_{n} dW(t),v).
 \end{equation}
Then for $k=1,...,n$ we have for $t\in [0,T]$:
\begin{equation*} d(u_{n}(t),e_{k})=\langle P_{n}F(u_{n}(t)),e_{k}\rangle dt+\sum \limits_{j=1}^{n} (P_{n}\sigma (t,u_{n}(t))\psi_{j},e_{k})d\beta_{j}(t).
\end{equation*}
Now we use \cite{liu2015stochastic} Thm 3.1.1 about existence and uniqueness of solutions to stochastic differential equations.
Note that since it is in finite dimensions,  there exists some constant $C(n)$ such that $\| v \|_{H^{2}}\leq C(n) \| v \|_{L^{2}}$ for $v\in \mathcal{H}_{n}$.\\
 Let $\varphi , \psi , v \in \mathcal{H}_{n}$; integration by parts implies that
\begin{equation*}\mid \langle \partial_{1}^{2} \varphi- \partial_{1}^{2} \psi , v \rangle \mid \leq \| \varphi-\psi \|_{1,0}\| v \|_{1,0}\leq C(n)^{2}\|\varphi-\psi\|_{L^{2}}\| v \|_{L^{2}}.\end{equation*}
Moreover, we have
\begin{equation*}
\begin{split}
\mid \langle B(\varphi)-B(\psi),v \rangle\mid &=\mid -\langle B(\varphi-\psi ,v),\varphi \rangle-\langle B(\psi ,v),\varphi-\psi \rangle\mid \\
&\leq C\| \varphi-\psi \|_{H^{1,0}}(\| \varphi \|_{H^{1,0}}+\| \psi \|_{H^{1,0}}) \| v \|_{H^{1,1}}\\
&\leq CC(n)^{3}\| \varphi-\psi \|_{L^{2}}(\| \varphi \|_{L^{2}}+\| \psi \|_{L^{2}})\| v\|_{L^{2}}.
\end{split}
\end{equation*}

Hence we know that for $u ,v \in \mathcal{H}_{n}$, and $\| u \|_{L^{2}},\| v \|_{L^{2}}\leq R$,
$$\mid \langle F(u)-F(v),u-v \rangle\mid \leq 2RC(n)^{3}\| u-v \|_{L^{2}}^{2},$$
The condition (\textbf{C}) implies that for $u ,v \in \mathcal{H}_{n}$, and $\| u \|_{L^{2}},\| v \|_{L^{2}}\leq R$,
\begin{equation*}
\begin{split}
\| P_{n}(\sigma(t,u)-\sigma(t,v))\|_{L^2(\ell^2,H)}^{2}
&\leq \| \sigma(t,u)-\sigma(t,v)\|_{L^2(\ell^2,H)}^{2}\\
&\leq L_{1}\| u-v\|_{L^{2}}^{2}+L_{2}\| \partial_{1}(u-v) \|_{L^{2}}^{2}\\
&\leq C(n)^{2}(L_{1}+L_{2})\| u-v \|_{L^{2}}^{2}.
\end{split}
\end{equation*}
So it satisfies local weak monotonicity.
Moreover,
\begin{equation*}
\begin{split}
2\langle u,P_{n}F(u)\rangle+\|P_{n}\sigma(t,u)\|_{{L^2(\ell^2,H)}}^{2}&\leq \| u\|_{H^{1,0}}^{2}+\| \sigma \|_{L^{2}(\ell^2,H)}^{2}\\
&\leq C(n)^2\| u \| _{L^{2}}^{2}+K_{0}+K_{1}\| u \|_{L^{2}}^{2}+K_{2}\| \partial_{1}u \|_{L^{2}}^{2}\\
&\leq K_{0}+\bigl(C(n)^2+K_{1}+K_{2}C(n)^2\bigl)\| u \| _{L^{2}}^{2}.
\end{split}
\end{equation*}
Thus it satisfies weak coercivity.\\
Hence by \cite{liu2015stochastic} Thm 3.1.1, there exists a unique global strong  solution $u_{n}(t)$ to (4.1). Moreover, $u\in C([0,T],\mathcal{H}_{n}), P-a.s.$

\subsection{The $L^{2}$ Energy Estimates}
In this section, we give the following a priori estimates.
\begin{lem}

We have the following energy estimates under the hypothesis of Thm 4.1:
$$E(\sup\limits_{t\in[0,T]}\| u_{n}(t) \|_{L^{2}}^{2})+E\int_{0}^{T}\| u_{n}(t) \|_{H^{1,0}}^{2}dt\leq C(1+E\| u_{0} \|_{L^{2}}^{2}).$$
\end{lem}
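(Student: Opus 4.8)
The plan is to apply It\^o's formula to $\|u_n(t)\|_{L^2}^2$ for the finite-dimensional system \eqref{S4eq9}, extract the horizontal dissipation, absorb the It\^o correction using the growth part of Condition (\textbf{C}), and then close the estimate by Gronwall together with the Burkholder--Davis--Gundy inequality to move the supremum inside the expectation.

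First I would compute, using that $u_n(t)\in\mathcal{H}_n\subset V$ so $b(u_n,u_n,u_n)=0$ and $\langle u_n,\partial_1^2u_n\rangle=-\|\partial_1u_n\|_{L^2}^2$, the identity
$$\|u_n(t)\|_{L^2}^2+2\int_0^t\|\partial_1u_n\|_{L^2}^2\,ds=\|P_nu_0\|_{L^2}^2+\int_0^t\|P_n\sigma(s,u_n)\Pi_n\|_{L^2(\ell^2,H)}^2\,ds+M_n(t),$$
where $M_n(t):=2\int_0^t(u_n,P_n\sigma(s,u_n)\Pi_n\,dW)$ is a local martingale with $\langle M_n\rangle_t\le 4\int_0^t\|u_n\|_{L^2}^2\|\sigma(s,u_n)\|_{L^2(\ell^2,H)}^2\,ds$. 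Since $\|P_n\cdot\|_{L^2}\le\|\cdot\|_{L^2}$ and, by the growth condition, $\|\sigma(s,u_n)\|_{L^2(\ell^2,H)}^2\le K_0+K_1\|u_n\|_{L^2}^2+K_2\|\partial_1u_n\|_{L^2}^2$ with $K_2<\tfrac{2}{11}<2$, part of the It\^o correction is absorbed into the dissipation, leaving the coefficient $(2-K_2)>0$ in front of $\int_0^t\|\partial_1u_n\|_{L^2}^2\,ds$.

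To legitimately take expectations I would introduce the stopping times $\tau_R:=\inf\{t\ge 0:\|u_n(t)\|_{L^2}\ge R\}\wedge T$, evaluate the identity at $t\wedge\tau_R$ (so that $M_n(\cdot\wedge\tau_R)$ is a genuine martingale of zero mean), apply Gronwall to the resulting inequality for $E\|u_n(t\wedge\tau_R)\|_{L^2}^2$, and let $R\to\infty$ via Fatou (using $u_n\in C([0,T],\mathcal{H}_n)$) to get $\sup_{t\le T}E\|u_n(t)\|_{L^2}^2\le C(1+E\|u_0\|_{L^2}^2)$, uniformly in $n$; feeding this back into the energy identity then gives $E\int_0^T\|\partial_1u_n\|_{L^2}^2\,ds\le C(1+E\|u_0\|_{L^2}^2)$, hence the $E\int_0^T\|u_n\|_{H^{1,0}}^2\,dt$ bound since $\|u\|_{H^{1,0}}^2=\|u\|_{L^2}^2+\|\partial_1u\|_{L^2}^2$.

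For the remaining statement, take $\sup_{t\le T}$ in the energy identity and then expectation; the only delicate term is $E\sup_{t\le T}|M_n(t)|$, which by BDG is $\le CE\big(\int_0^T\|u_n\|_{L^2}^2\|\sigma(s,u_n)\|_{L^2(\ell^2,H)}^2\,ds\big)^{1/2}\le CE\big(\sup_{t\le T}\|u_n\|_{L^2}\,(\int_0^T\|\sigma(s,u_n)\|_{L^2(\ell^2,H)}^2\,ds)^{1/2}\big)$. A Young inequality splits this as $\tfrac12 E\sup_{t\le T}\|u_n\|_{L^2}^2+CE\int_0^T\|\sigma(s,u_n)\|_{L^2(\ell^2,H)}^2\,ds$, and the second term is bounded by $C\big(K_0T+K_1\int_0^T E\|u_n\|_{L^2}^2\,ds+K_2\int_0^T E\|\partial_1u_n\|_{L^2}^2\,ds\big)$, already controlled; absorbing $\tfrac12 E\sup_{t\le T}\|u_n\|_{L^2}^2$ into the left-hand side finishes the proof. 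I expect this BDG/absorption step, together with the bookkeeping to keep all constants independent of $n$ and the initial localization that makes the expectations meaningful, to be the only genuine obstacle; the algebra producing the dissipation is identical to that in Lemma \ref{S3lem1}.
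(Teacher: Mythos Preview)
Your proof is correct and uses the same ingredients as the paper (It\^o's formula for $\|u_n\|_{L^2}^2$, the growth part of Condition (\textbf{C}), BDG, and Gronwall), but the order of operations differs. The paper goes directly for $E\sup_{t\le T}\|u_n(t)\|_{L^2}^2$: it takes the supremum in the It\^o identity, applies BDG with a free parameter $\beta$ in the Young split, and then chooses $\beta\in(0,1)$ so that the combined coefficient $(1+4/\beta)K_2$ in front of $\int_0^t\|\partial_1 u_n\|_{L^2}^2$ is strictly less than $2$, allowing the whole $\partial_1$-term (from both the It\^o correction and the BDG remainder) to be absorbed into the dissipation in a single step; Gronwall is applied to the resulting inequality for $E\sup_{s\le t}\|u_n(s)\|_{L^2}^2$. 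You instead run a two-stage argument: first obtain $\sup_t E\|u_n(t)\|_{L^2}^2$ (with explicit localization by $\tau_R$, which the paper omits) and $E\int_0^T\|\partial_1 u_n\|_{L^2}^2$, and only then invoke BDG for $E\sup_t\|u_n\|_{L^2}^2$, treating the $K_2$-term as already controlled. Your route is slightly longer but more robust---it only needs $K_2<2$ for this lemma, whereas the paper's one-pass argument needs the sharper constraint $(1+4/\beta)K_2<2$; this does not matter for the theorem since the stronger bound $K_2<\tfrac{2}{11}$ is imposed anyway (it is actually used in Lemma 4.2 for the $L^4$ estimate).
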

\proof  Let $u_{n}(t)$ be the  solution to (4.1) described above. By It\^{o}'s formula, we have:
\beq \label{S4eq2}
\begin{split}
\| u_{n}(t) \|_{L^{2}}^{2}=&\| P_{n}u_{0}\|_{L^{2}}^{2} +2\int_{0}^{t}(\sigma(s,u_{n}(s))dW_{n}(s),u_{n}(s))\\
 &-2\int_{0}^{t}\| \partial_{1}u_{n}(s)\|_{L^{2}}^{2}ds+\int_{0}^{t}\| P_{n}\sigma(s,u_{n}(s))\Pi_{n}\|_{L^{2}(\ell^2,H)}^{2}ds.
 \end{split}
\eeq
The growth condition implies that
\begin{equation}\label{S4eq3}
 \int_{0}^{t}\| P_{n}\sigma(s,u_{n}(s))\Pi_{n}\|_{L^{2}(\ell^2,H)}^{2}ds\leq \int_{0}^{t}[K_{0}+K_{1}\| u_{n}(t) \|_{L^{2}}^{2}+K_{2}\| \partial_{1}u_{n}(t) \|_{L^{2}}^{2}]ds.
 \end{equation}
The Burkholder-Davis-Gundy inequality( see Thm 6.1.2, chapter 6 in \cite{liu2015stochastic})  and the Young inequality as well as the growth condition imply that:
\begin{equation}\label{S4eq4}
\begin{split}
&E\Bigl(\sup\limits_{s\leq t}\mid 2\int_{0}^{s}(P_{n}\sigma(r,u_{n}(r))dW_{n}(r),u_{n}(r))\mid\Bigr)\\
&\leq 4E\Bigl\{\int_{0}^{t}\| P_{n}\sigma(r,u_{n})(r)\Pi_{n}\|_{L^{2}(\ell^2,H)}^{2}\| u_{n}(r) \|_{L^{2}}^{2}dr\Bigr\}^{\frac{1}{2}}\\
&\leq \beta E(\sup\limits_{s\leq t} \| u_{n}(s) \|_{L^{2}}^{2})+\frac{4}{\beta}E\int_{0}^{t}[K_{0}+K_{1}\| u_{n}(s) \|_{L^{2}}^{2}+K_{2}\| \partial_{1}u_{n}(t) \|_{L^{2}}^{2}]ds.
\end{split}
\end{equation}
 Since $K_{2}<\frac{2}{11}$,  we can choose $0<\beta<1$ such that $(\frac{4}{\beta}+1)K_{2}-2<0$.

 By \eqref{S4eq2}-\eqref{S4eq4} and dropping some negative terms, we deduce:
\begin{equation*}
(1-\beta)E\sup\limits_{s\in[0,t]}\| u_{n}(s) \|_{L^{2}}^{2}\leq E\| u(0)\|_{L^{2}}^{2}+CK_{0}T+CE\int_{0}^{t}K_{1}\| u_n(s)\|_{L^{2}}^{2}ds.
\end{equation*}
Gronwall's lemma implies that
\begin{equation}\label{S4eq40}
 E(\sup\limits_{t\in[0,T]}\| u_{n}(t) \|_{L^{2}}^{2})\leq C,
 \end{equation}
where C is a constant depending on $K_{0},K_{1},K_{2},T$ but not $n$.\\
Inserting \eqref{S4eq40} back to \eqref{S4eq2}-\eqref{S4eq4} yields
\begin{equation*}
E(\sup\limits_{t\in[0,T]}\| u_{n}(t) \|_{L^{2}}^{2})+E\int_{0}^{t}\| u_{n}(t) \|_{H^{1,0}}^{2}ds\leq C(1+E\| u_{0} \|_{L^{2}}^{2}),
\end{equation*}
where C is a constant depending on $K_{0},K_{1},K_{2},T$ but not $n$.\\
This completes the proof.
\qed
\\

However, it is not enough that we only have $L^{2}(\Omega)$ estimates. We also need an $L^{4}(\Omega)$ uniform
estimates of $u_{n}$.
\begin{lem}
We have the following uniform estimates under the hypothesis of Thm 4.1:
\begin{equation*}
E(\sup\limits_{t\in[0,T]}\| u_{n}(t)\|_{L^{2}}^{4})+E\int_{0}^{T}\| u_{n}(t)\|_{L^{2}}^{2}\| u_{n}(t)\|_{H^{1,0}}^{2}dt \leq C(1+E\| u_{0} \|_{L^{2}}^{4}).
\end{equation*}
\end{lem}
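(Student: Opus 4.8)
The plan is to run It\^o's formula on $\|u_n(t)\|_{L^2}^4=\bigl(\|u_n(t)\|_{L^2}^2\bigr)^2$, feeding in the expansion \eqref{S4eq2} of $\|u_n(t)\|_{L^2}^2$ obtained in the preceding lemma. Writing $X_t:=\|u_n(t)\|_{L^2}^2$, \eqref{S4eq2} gives $dX_t=\bigl(-2\|\partial_1u_n\|_{L^2}^2+\|P_n\sigma(t,u_n)\Pi_n\|_{L^2(\ell^2,H)}^2\bigr)\,dt+2\bigl(P_n\sigma(t,u_n)\Pi_n\,dW,u_n\bigr)$, with $d\langle X\rangle_t=4\sum_j\bigl(P_n\sigma(t,u_n)\psi_j,u_n\bigr)^2\,dt\le 4\|P_n\sigma(t,u_n)\Pi_n\|_{L^2(\ell^2,H)}^2\|u_n\|_{L^2}^2\,dt$. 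Since $d(X^2)=2X\,dX+d\langle X\rangle$, and using the growth bound $\|P_n\sigma(t,u_n)\Pi_n\|_{L^2(\ell^2,H)}^2\le K_0+K_1\|u_n\|_{L^2}^2+K_2\|\partial_1u_n\|_{L^2}^2$ from Condition (\textbf{C}) together with $\|P_nu_0\|_{L^2}\le\|u_0\|_{L^2}$, I would arrive at
\beno
\|u_n(t)\|_{L^2}^4+(4-6K_2)\int_0^t\|u_n\|_{L^2}^2\|\partial_1u_n\|_{L^2}^2\,ds
&\le&\|u_0\|_{L^2}^4+C\int_0^t\bigl(\|u_n\|_{L^2}^2+\|u_n\|_{L^2}^4\bigr)\,ds\\
&&+4\int_0^t\|u_n\|_{L^2}^2\bigl(P_n\sigma(s,u_n)\Pi_n\,dW,u_n\bigr),
\eeno
where the coefficient $6K_2$ comes from the two copies of $\int\|u_n\|_{L^2}^2\|\partial_1u_n\|_{L^2}^2\,ds$ generated respectively by $2X\,dX$ (contributing $2K_2$) and by $d\langle X\rangle$ (contributing $4K_2$); the identity is useful precisely because $4-6K_2>0$, which holds under $K_2<2/11$ (indeed any $K_2<2/3$ would do here, the sharper threshold being inherited from the $L^2$ supremum estimate \eqref{S4eq40}). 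To make every integrand bounded, I would first localise with the stopping times $\tau_R:=\inf\{t\in[0,T]:\|u_n(t)\|_{L^2}\ge R\}$, which increase to $T$ almost surely since $u_n\in C([0,T],\mathcal{H}_n)$, prove all bounds uniformly in $R$, and let $R\to\infty$ via Fatou's lemma at the end.

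First I would take expectations in the displayed identity up to time $t\wedge\tau_R$. The martingale term vanishes, the coefficient $4-6K_2$ is positive, and by \eqref{S4eq40} one has $E\int_0^T\|u_n\|_{L^2}^2\,ds\le CT$, so
\beno
E\|u_n(t\wedge\tau_R)\|_{L^2}^4&\le& E\|u_0\|_{L^2}^4+C+C\int_0^tE\|u_n(s\wedge\tau_R)\|_{L^2}^4\,ds.
\eeno
Gronwall's lemma and then $R\to\infty$ give the pointwise fourth-moment bound $\sup_{t\in[0,T]}E\|u_n(t)\|_{L^2}^4\le C(1+E\|u_0\|_{L^2}^4)$ (finite by the standing assumption $u_0\in L^4(\Omega,\tilde H^{0,1})$), and inserting this back into the identity yields $E\int_0^T\|u_n\|_{L^2}^2\|\partial_1u_n\|_{L^2}^2\,ds\le C(1+E\|u_0\|_{L^2}^4)$.

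Next I would return to the It\^o identity, take $\sup_{t\le T\wedge\tau_R}$ and then expectation; the running integral and the $\|u_n\|_{L^2}^2$, $\|u_n\|_{L^2}^4$ terms are controlled by the previous step and \eqref{S4eq40}, while for the stochastic term the Burkholder--Davis--Gundy inequality gives
\beno
E\sup_{t\le T\wedge\tau_R}\Bigl|4\int_0^t\|u_n\|_{L^2}^2\bigl(P_n\sigma\Pi_n\,dW,u_n\bigr)\Bigr|&\lesssim& E\Bigl(\sup_{s\le T\wedge\tau_R}\|u_n\|_{L^2}^4\int_0^{T\wedge\tau_R}\|u_n\|_{L^2}^2\|P_n\sigma(s,u_n)\Pi_n\|_{L^2(\ell^2,H)}^2\,ds\Bigr)^{1/2}.
\eeno
Young's inequality absorbs $\frac12E\sup_{t\le T\wedge\tau_R}\|u_n\|_{L^2}^4$ into the left-hand side and leaves a constant times $E\int_0^T\|u_n\|_{L^2}^2\|P_n\sigma(s,u_n)\Pi_n\|_{L^2(\ell^2,H)}^2\,ds$, which by the growth bound is $\lesssim E\int_0^T\bigl(\|u_n\|_{L^2}^2+\|u_n\|_{L^2}^4+\|u_n\|_{L^2}^2\|\partial_1u_n\|_{L^2}^2\bigr)\,ds$, hence finite uniformly in $n$ and $R$ by the previous step. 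Letting $R\to\infty$ gives $E\sup_{t\in[0,T]}\|u_n(t)\|_{L^2}^4\le C(1+E\|u_0\|_{L^2}^4)$. Finally, since $\|u_n\|_{H^{1,0}}^2=\|u_n\|_{L^2}^2+\|\partial_1u_n\|_{L^2}^2$, combining $E\int_0^T\|u_n\|_{L^2}^4\,ds\le T\sup_tE\|u_n(t)\|_{L^2}^4$ with the bound on $E\int_0^T\|u_n\|_{L^2}^2\|\partial_1u_n\|_{L^2}^2\,ds$ from the previous step gives the asserted control of $E\int_0^T\|u_n\|_{L^2}^2\|u_n\|_{H^{1,0}}^2\,dt$.

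The step I expect to be most delicate is the bookkeeping around the dissipative term: one must correctly account for the two separate occurrences of $\int\|u_n\|_{L^2}^2\|\partial_1u_n\|_{L^2}^2\,ds$ produced by It\^o's formula, verify that the parabolic gain $4$ beats their total $6K_2$, and keep the whole fourth-moment argument genuinely bootstrapped on the $L^2$ supremum estimate of the preceding lemma — neither Step 1 nor Step 2 can be closed before the bound \eqref{S4eq40}, and its consequence $E\int_0^T\|u_n\|_{L^2}^2\,ds\le CT$, are in hand.
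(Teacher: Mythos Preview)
Your argument is correct and follows essentially the same template as the paper --- apply It\^o to $\bigl(\|u_n\|_{L^2}^2\bigr)^2$, use the growth condition and BDG, close by Gronwall --- but you organise the estimate differently, and the difference is worth noting.

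The paper does everything in a single pass: take $\sup_{s\le t}$, then expectation, and apply BDG with a Young parameter $\gamma$. This produces a combined coefficient $(6+\tfrac{16}{\gamma})K_2$ in front of $E\int\|u_n\|_{L^2}^2\|\partial_1u_n\|_{L^2}^2\,ds$, which must be beaten by the dissipative $4$; requiring $\gamma<1$ is exactly what forces $K_2<2/11$. Your two-step bootstrap is cleaner in this respect: in Step~1 you take expectation first (so the martingale drops out and no BDG parameter enters), and the only constraint is $4-6K_2>0$, i.e.\ $K_2<2/3$. The mixed integral $E\int\|u_n\|_{L^2}^2\|\partial_1u_n\|_{L^2}^2\,ds$ is then already controlled when you run Step~2, so the BDG remainder after Young can be absorbed regardless of how large its constant is. You correctly observe that the sharper threshold $K_2<2/11$ is inherited from the preceding $L^2$-supremum lemma rather than being needed here. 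Your explicit localisation by $\tau_R$ is also more careful than the paper, which leaves this implicit.
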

\proof
Applying once more the It\^{o}'s formula to the square of $\| \cdot \|_{L^{2}}^{2}$, we obtain:
\begin{equation}\label{S4eq5}
\| u_{n}(t)\|_{L^{2}}^{4}=\| P_{n}u_{0} \|_{L^{2}}^{4}-4\int_{0}^{t}\| \partial_{1} u_{n}(s)\|_{L^{2}}^{2}\| u_{n}(s)\|_{L^{2}}^{2}ds+I_{1}+I_{2}+I_{3},
\end{equation}
where
\beno
\begin{split}
I_{1}=&4\int_{0}^{t}(\sigma(s,u_{n}(s))dW_{n}(s),u_{n}(s))\| u_{n}(s)\|_{L^{2}}^{2}\ , \\
I_{2}=&2\int_{0}^{t}\| P_{n}\sigma(s,u_{n}(s))\Pi_{n}\|_{L^{2}(\ell^2,H)}^{2}\| u_{n}(s) \|_{L^{2}}^{2}\,ds,\\
I_{3}=&4\int_{0}^{t}\| (P_{n}\sigma(s,u_{n}(s))\Pi_{n})^{*}(u_{n})\|_{l^{2}}^{2}ds.
\end{split}
\eeno
The growth condition implies that
\begin{equation}\label{S4eq6}
I_{2}(t)+I_{3}(t)\leq 6\int_{0}^{t}(K_{0}+K_{1}\| u_{n}(s) \|_{L^{2}}^{2}+K_{2}\| \partial_{1}u_{n}(t) \|_{L^{2}}^{2})\| u_{n}(s) \|_{L^{2}}^{2}ds.
\end{equation}
The Burkholder-Davis-Gundy inequality, the growth condition and the Young inequality imply that:
\begin{equation}\label{S4eq7}
\begin{split}
E(\sup\limits_{s\leq t}I_{1}(s))\leq& 8 E\Bigl\{\int_{0}^{t}\| \sigma(r,u_{n}(r))
\|_{L^{2}(\ell^2,H)}^{2}\| u_{n}(r)\|_{L^{2}}^{6}dr\Bigr\}^{\frac{1}{2}}\\
\leq &\gamma E(\sup\limits_{s\leq t}\| u_{n}(s)\|_{L^{2}}^{4})\\
&+\frac{16}{\gamma}E\int_{0}^{t}(K_{0}+K_{1}\| u_{n}(s)\|_{L^{2}}^{2}+K_{2}\| \partial_{1}u_{n}(t) \|_{L^{2}}^{2})\| u_{n}(s)\|_{L^{2}}^{2}ds.
\end{split}
\end{equation}
Since $K_{2}<\frac{2}{11}$, we can choose $0<\gamma<1$, such that
$6K_{2}+ \frac{16}{\gamma}K_{2}-4<0$.\\
Thus combining \eqref{S4eq5}-\eqref{S4eq7} and dropping some negative terms on the right of the inequality, we have:
$$(1-\gamma)E(\sup\limits_{t\in[0,T]}\| u_{n}(t) \|_{L^{2}}^{4})\leq
E\| u(0) \|_{L^{2}}^{4}+E\int_{0}^{t}C_{1}\| u_{n}(s) \|_{L^{2}}^{4}+C_{2}\| u_n(s) \|_{L^{2}}^{2}ds.
$$

Since we have obtained $E(\sup\limits_{t\in[0,T]}\| u_{n}(t) \|_{L^{2}}^{2})\leq C$,
the Gronwall inequality yields
$$E(\sup\limits_{t\in[0,T]}\| u_{n}(t)\|_{L^{2}}^{4})<\infty.$$
Similar as in the proof of Lemma 4.1, we complete the proof.
\qed

\subsection{Tightness and the Skorokhod Theorem}
 In this section we use the classical tightness methods. Similar to the deterministic cases, $L^{2}$-estimates are not enough to obtain strong convergence. As a result, we use tightness in the following space $\mathcal{X}$.

Let $\hat{P}_{n}$ be the law of $u_{n}$ on $C([0,T];H^{-1})$.
\begin{lem}Under the hypothesis of Thm. 4.1,
$\hat{P}_{n}$ is tight in the space
$$\mathcal{X}=C([0,T];H^{-1})\cap L^{2}([0,T];H) \cap L_{w}^{2}([0,T];H^{1,1})\cap L_{w^*}^{\infty}([0,T];H^{0,1}),$$
where $L_{w}^{2}([0,T];H^{1,1})$ denotes $L^{2}([0,T]; H^{1,1})$ with the weak topology and $L_{w^*}^{\infty}([0,T];H^{0,1})$ denotes $L^{\infty}([0,T];H^{0,1})$ with the weak star topology.
\end{lem}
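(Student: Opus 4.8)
The plan is to establish tightness of $\{\hat P_n\}$ on $\cX$ by verifying the criterion componentwise: tightness in each of the four spaces appearing in the intersection, which suffices since tightness is stable under finite intersections of topologies. First I would gather the $n$-uniform bounds already available: from Lemma~4.1 and Lemma~4.2 we have $\sup_n E\bigl(\sup_{t\le T}\|u_n(t)\|_{L^2}^2\bigr)<\infty$ and $\sup_n E\int_0^T\|u_n\|_{H^{1,0}}^2\,dt<\infty$; in addition one needs the analogous $H^{0,1}$-bound $\sup_n E\bigl(\sup_{t\le T}\|u_n(t)\|_{0,1}^2\bigr)<\infty$ together with $\sup_n E\int_0^T\|\partial_1\partial_2 u_n\|_{L^2}^2\,dt<\infty$ — this is the Galerkin analogue of Lemma~\ref{S3lem2}, obtained by applying It\^o's formula to $e^{-2c\int_0^t\|\partial_1u_n\|_{L^2}^2\,ds}\|\partial_2 u_n(t)\|_{L^2}^2$ with $c$ chosen using $\tilde K_2<\tfrac25$ (as announced in the introduction). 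Granting these, the bounds on $L^2([0,T];H^{1,1})$ and $L^\infty([0,T];H^{0,1})$ are immediate from Chebyshev, and since closed balls of $H^{1,1}$ (resp. $H^{0,1}$) are weakly (resp. weak-$\star$) compact and metrizable on bounded sets of these separable Hilbert spaces, this yields tightness in $L_w^2([0,T];H^{1,1})$ and $L_{w^*}^\infty([0,T];H^{0,1})$.

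For tightness in $C([0,T];H^{-1})$ I would use the Aldous criterion (or a direct fractional-Sobolev-in-time estimate): write $u_n(t)=P_nu_0+\int_0^t P_nF(u_n)\,ds+\int_0^t P_n\sigma(s,u_n)\Pi_n\,dW$. The drift part is, after the $H^{1,0}$ and $L^4$ bounds, uniformly bounded in $W^{1,p}([0,T];H^{-1})$ for suitable $p>1$: indeed $\|B(u_n)\|_{H^{-1}}\lesssim\|u_n\|_{L^4}^2$ and $\int_0^T\|u_n\|_{L^4}^2\,dt\lesssim \int_0^T\|u_n\|_{L^2}\|u_n\|_{H^{1,0}}\,dt$ is $n$-uniformly integrable in expectation by Cauchy--Schwarz and Lemma~4.2, while $\|\partial_1^2 u_n\|_{H^{-1}}\le\|\partial_1 u_n\|_{L^2}$. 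The stochastic part is handled by the Kolmogorov continuity/factorization argument: using the growth condition $\|\sigma(s,u_n)\|_{L^2(\ell^2,H^{-1})}^2\le K_0'+K_1'\|u_n\|_{L^2}^2$ and the Burkholder--Davis--Gundy inequality one gets $E\|\int_s^t P_n\sigma\Pi_n\,dW\|_{H^{-1}}^{2p}\lesssim |t-s|^p$ for any $p\ge1$, giving $n$-uniform bounds in $C^\alpha([0,T];H^{-1})$ for $\alpha<\tfrac12$. Since $C^\alpha([0,T];H^{-1})\hookrightarrow C([0,T];H^{-1})$ is not compact, I would instead combine the $C^\alpha([0,T];H^{-1-\delta})$ bound with the $L^\infty([0,T];L^2)$ bound and invoke the compact embedding $L^2\hookrightarrow H^{-1}$ via an Arzel\`a--Ascoli / Aubin--Lions-in-time argument to conclude tightness in $C([0,T];H^{-1})$.

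Finally, tightness in $L^2([0,T];H)$ follows from the deterministic compactness mechanism used in Theorem~\ref{thm1}: the set of functions bounded in $L^2([0,T];H^{1,0})$ — equivalently controlled in $\|\partial_1 u\|_{L^2}$ — together with the $H^{0,1}$-bound controlling $\|\partial_2 u\|_{L^2}$, lies in a bounded subset of $L^2([0,T];H^1)$, and combined with the equicontinuity-in-time estimate in $H^{-1}$ this sits in a compact subset of $L^2([0,T];L^2)$ by the Aubin--Lions--Simon lemma; a Chebyshev argument then converts the expectation bounds into tightness. Assembling the four, $\{\hat P_n\}$ is tight on $\cX$. The main obstacle I anticipate is the $H^{0,1}$ a priori estimate for the Galerkin system: unlike the $L^2$ estimate one cannot apply It\^o directly to $\|\partial_2 u_n\|_{L^2}^2$ and close it in expectation because the nonlinear term produces $\|\partial_1\partial_2 u_n\|_{L^2}\|\partial_1 u_n\|_{L^2}\|\partial_2 u_n\|_{L^2}$, which after Young's inequality leaves a $\|\partial_1 u_n\|_{L^2}^2\|\partial_2 u_n\|_{L^2}^2$ term with a random coefficient that is not a priori integrable; the exponential weight $e^{-2c\int_0^t\|\partial_1 u_n\|_{L^2}^2\,ds}$ is precisely the device that absorbs it, and one must check that the stochastic corrector terms coming from It\^o's formula applied to the product are controlled using $\tilde K_2<\tfrac25$. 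Care is also needed to ensure all estimates are genuinely independent of $n$, using $P_n=\tilde P_n$ on $\tilde H^{0,1}$ as established in Section~4.1.
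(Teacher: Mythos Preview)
Your overall architecture is sound and close to the paper's, but there is a genuine gap at the point where you pass from the exponentially weighted $H^{0,1}$ estimate to the unweighted one. You write that the It\^o computation on $e^{-2c\int_0^t\|\partial_1 u_n\|_{L^2}^2\,ds}\|\partial_2 u_n\|_{L^2}^2$ yields
\[
\sup_n E\Bigl(\sup_{t\le T}\|u_n(t)\|_{0,1}^2\Bigr)<\infty
\quad\text{and}\quad
\sup_n E\int_0^T\|\partial_1\partial_2 u_n\|_{L^2}^2\,dt<\infty,
\]
but it does not: what one actually obtains (this is the paper's Lemma~4.4) is only the \emph{weighted} bound
\[
\sup_n E\Bigl(\sup_{t\le T}e^{-h(t)}\|u_n(t)\|_{0,1}^2\Bigr)
+\sup_n E\int_0^T e^{-h(t)}\|u_n(t)\|_{H^{1,1}}^2\,dt<\infty,
\qquad h(t)=2c\!\int_0^t\|\partial_1 u_n\|_{L^2}^2\,ds.
\]
Removing the weight in expectation would require exponential moments of $\int_0^T\|\partial_1 u_n\|_{L^2}^2\,ds$, which are not available. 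This is not a technicality: it is exactly the reason the paper does \emph{not} argue via separate Chebyshev bounds on the four spaces as you do.

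The paper's device to get around this is worth internalizing. One builds the compact set $K_R$ so that it simultaneously records the \emph{unweighted} $L^\infty_t L^2$ and $L^2_t H^{1,0}$ bounds (which do hold in expectation by Lemma~4.1) and the \emph{weighted} $L^\infty_t H^{0,1}$ and $L^2_t H^{1,1}$ bounds. On such a set the pathwise inequality $\int_0^T\|\partial_1 u\|_{L^2}^2\,dt\le R$ forces $e^{-h(t)}\ge e^{-2cR}$, so the weighted bounds become genuine unweighted pathwise bounds on $K_R$. Only then can one feed $u\in L^2([0,T];H^{1,1})\cap L^\infty([0,T];H^{0,1})$ into the compactness arguments (Arzel\`a--Ascoli for $C_tH^{-1}$, and the interpolation $\|u_n-u\|_{L^2}^2\le\|u_n-u\|_{H^1}\|u_n-u\|_{H^{-1}}$ for $L^2_tL^2$). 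Your componentwise Chebyshev approach cannot produce this coupling.

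A related casualty is your treatment of the drift in the time-regularity estimate: the inequality $\int_0^T\|u_n\|_{L^4}^2\,dt\lesssim\int_0^T\|u_n\|_{L^2}\|u_n\|_{H^{1,0}}\,dt$ is false in two dimensions --- control of $\|u\|_{L^4}$ needs a derivative in \emph{each} direction (e.g.\ $\|u\|_{L^4}^2\lesssim\|u\|_{L^2}\|\partial_1 u\|_{L^2}^{1/2}\|\partial_2 u\|_{L^2}^{1/2}$), so you again need the $H^{0,1}$ bound, which only becomes available pathwise on $K_R$ after the weight has been removed as above. The paper accordingly proves the $C^{1/8}([0,T];H^{-1})$ estimate for the drift \emph{conditionally on} $u\in \hat K_{R_0}$, not in raw expectation.
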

\proof
Firstly, since $\tilde{K}_{2}<\frac{2}{5}$, we can choose $\tilde{\alpha}, \tilde{\beta}\in (0,1)$, such that:
$$\tilde{K}_{2}+2\tilde{\alpha}+\frac{4}{\tilde{\beta}}\tilde{K}_{2}<2.
$$
From the calculation in Lemma 3.2, by the Young inequality, we deduce that:
\begin{equation}\label{S4eq10}
\mid(\partial_{2}(u\cdot \nabla u )\mid \partial_{2}u)\mid\leq \tilde{\alpha}\|\partial_{1}\partial_{2}u\|_{L^{2}}^{2}+C(\tilde{\alpha})\| \partial_{1}u\|_{L^{2}}^{2}\| \partial_{2}u\|_{L^{2}}^{2}.
\end{equation}

Let \beno
\begin{split}
K_{R}:=&\Bigl\{ u\in C([0,T], H^{-1});\sup\limits_{0\leq t\leq T}\| u(t) \|_{L^{2}}^{2}+\int_{0}^{T}\| u(t) \|_{H^{1,0}}^{2}dt+\| u \|_{C^{\frac{1}{8}}([0,T];H^{-1})}\\
&+
\sup\limits_{0\leq t\leq T}e^{-2C(\tilde{\alpha})\int_{0}^{t}\| \partial_{1}u\|_{L^{2}}^{2}ds}  \| u(t) \|_{H^{0,1}}^{2}+
\int_{0}^{T}e^{-2C(\tilde{\alpha})\int_{0}^{t}\| \partial_{1}u\|_{L^{2}}^{2}dt}\| u \|_{H^{1,1}}^{2}dt\leq R\Bigr\}.
\end{split}
\eeno

Now we want to show that
\begin{itemize}
\item[(i)] For any $R>0$,$K_{R}$ is relatively compact in $\mathcal{X}$;

\item[(ii)] For any $\epsilon >0$, there exists $R>0$, such that $\hat{P}_{n}(K_{R})>1-\epsilon$ for any $n$.
\end{itemize}

\noindent Proof of (i): By the definition of $K_{R}$, it is obvious that $u\in K_{R}$ is bounded in $L^{2}([0,T];H^{1,1})$, thus $K_{R}$ is relatively compact in  $L_{w}^{2}([0,T];H^{1,1})$ and $L_{w^*}^{\infty}([0,T];H^{0,1})$.\\
Moreover, by definition, $K_{R}$ is equicontinuous in $C([0,T];H^{-1})$. The compactness in $C([0,T];H^{-1})$  can be obtained by Arzela-Ascoli Lemma.  \\
Finally we prove the compactness in $L^{2}([0,T]; L^{2})$.
Let $u_{n}$ be a sequence in $K_{R}$. We can assume that $u_{n}$ converges to $u$ in $C([0,T]; H^{-1})\cap L_{w}^{2}([0,T]; H^{1,1})$. Then we have:
\begin{equation*}
\begin{split}
\int_{0}^{T}\| u_{n}-u\|_{L^{2}}^{2}dt&\leq \int_{0}^{T}\| u_{n}-u\|_{H^{1}}\| u_{n}-u\|_{H^{-1}}dt\\
&\leq \bigl(\int_{0}^{T}\| u_{n}-u\|_{H^{1}}^{2}\,dt\bigr)^{\frac{1}{2}}\bigl(\int_{0}^{T}\| u_{n}-u\|_{H^{-1}}^{2}\,dt\bigr)^{\frac{1}{2}}\\
&\leq C_{R,T}\sup\limits_{t\in[0,T]}\| u_{n}-u\|_{H^{-1}}^{2}\\
&\rightarrow 0,
\end{split}
\end{equation*}
which finishes the proof of (i).\\
\\
Proof of (ii):
By Lemma 4.1 as well as Chebyshev's inequality, we can choose $R_{0}$ large enough such that:
\begin{equation}\label{S4eq11}
P\Bigl(\sup\limits_{t\in[0,T]}\| u_{n}(t) \|_{L^{2}}^{2}+\int_{0}^{T}\| u_{n}(t) \|_{H^{1,0}}^{2}\,dt> \frac{R_{0}}{4}\Bigr)<\frac{\epsilon}{4}.
\end{equation}
Set $h(t)=2C(\tilde{\alpha})\int_{0}^{t}\| \partial_{1}u_{n} \|_{L^{2}}^{2}ds$.
Now we need another estimate as following:
\begin{equation}\label{S4eq12}
\begin{split}
&E(\sup\limits_{t\in[0,T]}(e^{-h(t)}\| u_{n}(t)\|_{H^{0,1}}^{2}))+E\int_{0}^{T}e^{-h(t)} \| u_{n}(t)\|_{H^{1,1}}^{2}dt\\
\leq &C(\tilde{K_{0}},\tilde{K_{1}},\tilde{K_{2}},T)(1+E\| u_{0}\|_{H^{0,1}}^{2}),
\end{split}
\end{equation}
the proof of which is postponed later to Lemma 4.4.

By \eqref{S4eq12} and Chebyshev's Inequality, we can choose $R_{0}$ large enough such that:
\begin{equation}\label{S4eq22}
\hat{P}_{n}\Bigl(\sup\limits_{0\leq t\leq T}e^{-2C(\tilde{\alpha})\int_{0}^{t}\| \partial_{1}u\|_{L^{2}}^{2}ds}  \| u(t) \|_{H^{0,1}}^{2}+\\
\int_{0}^{T}e^{-2C(\tilde{\alpha})\int_{0}^{t}\| \partial_{1}u\|_{L^{2}}^{2}ds}\| u \|_{H^{1,1}}^{2}dt>\frac{R_{0}}{4}\Bigl)<\frac{\epsilon}{4}.
\end{equation}
Now we fix $R_{0}$ and set
\begin{equation*}
\begin{split}
\hat{K}_{R_{0}}=\{&u\in  C([0,T], H^{-1}); \sup\limits_{t\in[0,T]}\| u(t) \|_{L^{2}}^{2}+\int_{0}^{T}\| u(t) \|_{H^{1,0}}^{2}\,dt\leq \frac{R_{0}}{4} \text{ and }\\
 &\sup\limits_{0\leq t\leq T}e^{-2C(\tilde{\alpha})\int_{0}^{t}\| \partial_{1}u\|_{L^{2}}^{2}ds}  \| u(t) \|_{H^{0,1}}^{2}+
\int_{0}^{T}e^{-2C(\tilde{\alpha})\int_{0}^{t}\| \partial_{1}u\|_{L^{2}}^{2}ds}\| u \|_{H^{1,1}}^{2}dt)\leq\frac{R_{0}}{4}\}.
\end{split}
\end{equation*}
Then we know $\hat{P}_{n}(C([0,T], H^{-1})\setminus \hat{K}_{R_{0}})<\frac{\epsilon}{2}.$
Now we only consider $u$ in $\hat{K}_{R_{0}}$.
By H\"{o}lder's inequality, we have:
\begin{equation}\label{S4eq18}
\begin{split}
E^{\hat{P}_{n}}\Bigl[\sup\limits_{s\neq t\in[0,T]}\bigl(\frac{\| \int_{s}^{t}-\partial_{1}^{2}u(r)+\dive(u\otimes u)dr\|_{H^{-1}}^{2}}{\mid t-s \mid}\bigr)1_{u\in \hat{K}_{R_{0}}}\Bigr]\\
\leq E^{\hat{P}_{n}}\Bigl[\int_{0}^{T}\|-\partial_{1}^{2}u(r)+\dive(u\otimes u)\|_{H^{-1}}^{2}\,dr1_{u\in \hat{K}_{R_{0}}}\Bigr].
\end{split}
\end{equation}
 The boundedness of $u$ in $L^{2}([0,T];H^{1,1})$ leads to the  boundedness of $\partial_{1}^{2}u$ in $L^{2}([0,T];H^{-1})$. By the definition of $K_{R}$, $u$ is also bounded in $L^{\infty}([0,T];H^{0,1})$. By  interpolation, $u$ is bounded in $L^{4}([0,T];H^{\frac{1}{2}})$. By Sobolev imbedding, $u$ is bounded in $L^{4}([0,T];L^{4})$. Thus we obtain:
 \begin{equation}\label{S4eq19}
 E^{\hat{P}_{n}}\Bigl[\int_{0}^{T}\|-\partial_{1}^{2}u(r)+div(u\otimes u)\|_{H^{-1}}^{2}\,dr1_{u\in \hat{K}_{R_{0}}}\Bigr]\leq C(R_{0}),
 \end{equation}
 where $C(R_{0})$ is independent of $n$.\\
 Thus by \eqref{S4eq18} and \eqref{S4eq19}, we have
 \begin{equation}\label{S4eq20}
E^{\hat{P}_{n}}\Bigl[\sup\limits_{s\neq t\in[0,T]}\bigl(\frac{\| \int_{s}^{t}-\partial_{1}^{2}u(r)+div(u\otimes u)dr\|_{H^{-1}}^{2}}{\mid t-s \mid}\bigr)1_{u\in \hat{K}_{R_{0}}}\Bigr]\leq C(R_{0}).
\end{equation}
 Moreover, for any $T\geq t>s\geq 0$ and any $p\in \mathbb{N}$ we have
 \begin{equation*}
 \begin{split}
  E^{\hat{P}_{n}}\| \int_{s}^{t}P_{n}\sigma(r,u(r))dW_{n}(r)\|_{H^{-1}}^{2p}&\leq C_{p}E^{\hat{P}_{n}}\Bigl(\int_{s}^{t}\| \sigma(r,u(r))\|_{L^{2}(\ell^2,H^{-1})}^{2}\,dr\Bigr)^{p}\\
  &\leq C_{p}\mid t-s\mid^{p-1}\int_{s}^{t}E^{\hat{P}_{n}}\| \sigma(r,u(r))\|_{L^{2}(\ell^2,H^{-1})}^{2p}dr\\
  &\leq C_{p}\mid t-s\mid^{p-1}\int_{s}^{t}E^{\hat{P}_{n}}(\| u(r)\|_{L^{2}}^{2p}+1)dr\\
  &\leq C_{p,T}\mid t-s\mid^{p}\bigl(1+E(\sup\limits_{t\in[0,T]}\| u_{n}(t)\|_{L^{2}}^{2p})\bigr).
 \end{split}
 \end{equation*}
 By Kolmogorov's criterion, for any $\alpha \in (0,\frac{p-1}{2p})$, we have:
 \begin{equation}\label{S4eq21}
 E^{\hat{P}_{n}}\Bigl(\sup\limits_{s\neq t\in[0,T]}\frac{\| \int_{s}^{t}P_{n}\sigma(r,u(r))dW_{n}(r)\|_{H^{-1}}^{2p}}{\mid t-s\mid^{p\alpha}}\Bigr)\leq C_{p,T}\bigl(1+E(\sup\limits_{t\in[0,T]}\| u_{n}(t)\|_{L^{2}}^{2p})\bigr).
 \end{equation}
 Choose $p=2$. By \eqref{S4eq20} and \eqref{S4eq21} , we get for $\alpha=\frac{1}{8}$:
 \begin{equation*}
 E^{\hat{P}_{n}}\Bigl(\sup\limits_{s\neq t\in[0,T]}\frac{\| u(t)-u(s)\|_{H^{-1}}}{\mid t-s\mid^{\alpha}}1_{u\in \hat{K}_{R_{0}}}\Bigr)<\infty.
 \end{equation*}
 Similarly, we choose $R>R_{0}$ large enough and obtain:
 \begin{equation}\label{S4eq45}
 \hat{P}_{n}(\|u \|_{C^{\frac{1}{8}}([0,T];H^{-1})}>\frac{R}{4} \text{ and } u\in \hat{K}_{R_{0}})<\frac{\epsilon}{4}.
 \end{equation}
 Combining with \eqref{S4eq11},\eqref{S4eq22} and \eqref{S4eq45} complete the proof.
 \qed

\begin{lem}
Under the hypothesis of Thm. 4.1, the uniform estimates \eqref{S4eq12} holds.

\end{lem}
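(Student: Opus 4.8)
The plan is to carry out, at the Galerkin level, the argument of Lemma \ref{S3lem2} coupled with the stochastic energy method of Section 4.4, applied to the \emph{weighted} quantity $e^{-h(t)}\|u_n(t)\|_{H^{0,1}}^{2}$ with $h(t)=2C(\tilde\alpha)\int_{0}^{t}\|\partial_{1}u_n\|_{L^{2}}^{2}\,ds$. First I would write It\^o's formula for $\|u_n(t)\|_{H^{0,1}}^{2}=\|u_n(t)\|_{L^{2}}^{2}+\|\partial_{2}u_n(t)\|_{L^{2}}^{2}$. Since $P_n$ and $\tilde P_n$ coincide on $\tilde H^{0,1}$ and $\partial_{2}^{2}\mathcal H_n\subset\mathcal H_n$ (recall $\partial_{2}^{2}e_k$ is a multiple of $e_k$), the contribution of $P_n\partial_{1}^{2}u_n$ to the drift is $-\|\partial_{1}u_n\|_{L^{2}}^{2}-\|\partial_{1}\partial_{2}u_n\|_{L^{2}}^{2}$, while the contribution of $-P_nB(u_n)$ reduces, using $\dive u_n=0$ (valid because $\mathcal H_n\subset H$) exactly as in the computation of Lemma \ref{S3lem2}, to $-(\partial_{2}(u_n\cdot\nabla u_n)\mid\partial_{2}u_n)$, which is bounded by \eqref{S4eq10}. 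Adding the It\^o correction $\|P_n\sigma(t,u_n)\Pi_n\|_{L^{2}(\ell^2,H^{0,1})}^{2}\,dt$ and the martingale term $2(P_n\sigma(t,u_n)\Pi_n\,dW,u_n)_{0,1}$, one obtains a differential inequality for $\|u_n(t)\|_{H^{0,1}}^{2}$ whose right-hand side contains the ``bad'' term $2C(\tilde\alpha)\|\partial_{1}u_n\|_{L^{2}}^{2}\|\partial_{2}u_n\|_{L^{2}}^{2}$.

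The step I expect to be the crux is disposing of this bad term: after taking expectations one cannot apply Gronwall to it directly, and this is precisely the role of the weight $e^{-h(t)}$. Since $h$ has finite variation, $d\bigl(e^{-h}X\bigr)=e^{-h}\,dX-h'(t)e^{-h}X\,dt$ with no extra It\^o term, and because $h'(t)=2C(\tilde\alpha)\|\partial_{1}u_n\|_{L^{2}}^{2}$ while $\|u_n\|_{H^{0,1}}^{2}\ge\|\partial_{2}u_n\|_{L^{2}}^{2}$, the new term $-h'(t)e^{-h}\|u_n\|_{H^{0,1}}^{2}$ exactly cancels the bad term. One is then left with
$$d\bigl(e^{-h(t)}\|u_n\|_{H^{0,1}}^{2}\bigr)+2e^{-h(t)}\|\partial_{1}u_n\|_{L^{2}}^{2}\,dt+2(1-\tilde\alpha)e^{-h(t)}\|\partial_{1}\partial_{2}u_n\|_{L^{2}}^{2}\,dt\le 2e^{-h(t)}(P_n\sigma\Pi_n\,dW,u_n)_{0,1}+e^{-h(t)}\|P_n\sigma\Pi_n\|_{L^{2}(\ell^2,H^{0,1})}^{2}\,dt.$$

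Finally I would integrate in time (first up to a localizing stopping time $\tau_R=\inf\{t:\|u_n(t)\|_{L^{2}}>R\}$ to make the stochastic integral a genuine martingale, then let $R\to\infty$ using Lemma 4.1), take the supremum over $[0,T]$ and expectations, and estimate the stochastic term by the Burkholder--Davis--Gundy and Young inequalities as in \eqref{S4eq4} (bounding $e^{-2h}\le e^{-h}$ and factoring out $\bigl(\sup_{s\le t}e^{-h(s)}\|u_n(s)\|_{H^{0,1}}^{2}\bigr)^{1/2}$), so that the martingale contributes $\tilde\beta\,E\sup_{s\le t}e^{-h(s)}\|u_n\|_{H^{0,1}}^{2}+\tfrac{4}{\tilde\beta}E\int_{0}^{t}e^{-h}\|P_n\sigma\Pi_n\|_{L^{2}(\ell^2,H^{0,1})}^{2}\,ds$ with $\tilde\beta\in(0,1)$. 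Inserting the growth bound on $\|\sigma\|_{L^{2}(\ell^2,H^{0,1})}^{2}$ from Condition (\textbf{C})(iii) produces on the right a term $(1+\tfrac{4}{\tilde\beta})\tilde K_2\bigl(\|\partial_{1}u_n\|_{L^{2}}^{2}+\|\partial_{1}\partial_{2}u_n\|_{L^{2}}^{2}\bigr)$; by the choice of $\tilde\alpha,\tilde\beta$ fixed in the proof of the tightness lemma (namely $\tilde K_2+2\tilde\alpha+\tfrac{4}{\tilde\beta}\tilde K_2<2$, which is where $\tilde K_2<\tfrac25$ enters), this is absorbed into $2e^{-h}\|\partial_{1}u_n\|_{L^{2}}^{2}$ and $2(1-\tilde\alpha)e^{-h}\|\partial_{1}\partial_{2}u_n\|_{L^{2}}^{2}$ on the left. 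The remaining terms are $\tilde K_0T$ and $\tilde K_1 E\int_{0}^{t}e^{-h}\|u_n\|_{H^{0,1}}^{2}\,ds\le\tilde K_1\int_{0}^{t}E\sup_{r\le s}e^{-h(r)}\|u_n(r)\|_{H^{0,1}}^{2}\,ds$; Gronwall's lemma then gives $E\sup_{t\le T}e^{-h(t)}\|u_n(t)\|_{H^{0,1}}^{2}\le C(\tilde K_0,\tilde K_1,\tilde K_2,T)(1+E\|P_nu_0\|_{H^{0,1}}^{2})$, and reinserting this bound controls $E\int_{0}^{T}e^{-h}\|u_n\|_{H^{1,1}}^{2}\,dt$ as well, since $\|u_n\|_{H^{1,1}}^{2}=\|u_n\|_{H^{0,1}}^{2}+\|\partial_{1}u_n\|_{L^{2}}^{2}+\|\partial_{1}\partial_{2}u_n\|_{L^{2}}^{2}$. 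As $\|P_nu_0\|_{H^{0,1}}\le\|u_0\|_{H^{0,1}}$ and $u_0\in L^{4}(\Omega,\tilde H^{0,1})$, this yields exactly \eqref{S4eq12}.
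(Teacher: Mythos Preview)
Your proposal is correct and follows essentially the same approach as the paper: apply It\^o's formula to the weighted quantity $e^{-h(t)}\|u_n(t)\|_{H^{0,1}}^{2}$, use \eqref{S4eq10} for the nonlinear term, bound the martingale by Burkholder--Davis--Gundy and Young, invoke the growth condition on $\sigma$ and the constraint $\tilde K_2+2\tilde\alpha+\tfrac{4}{\tilde\beta}\tilde K_2<2$ to absorb the gradient terms, then close by Gronwall and reinsert. Your write-up in fact supplies more justification than the paper's own proof (the role of $\partial_{2}^{2}\mathcal H_n\subset\mathcal H_n$ in removing $P_n$, and the localization), but the underlying argument is identical.
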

\proof
Using again the It\^{o}'s Formula to $e^{-h(t)}\| u_{n}(t) \|_{H^{0,1}}^{2}$, we obtain:
\begin{equation}\label{S4eq13}
\begin{split}
e^{-h(t)}\| u_{n}(t)\|_{H^{0,1}}^{2}&=
\| P_{n}u(0)\|_{H^{0,1}}^{2}+\sum\limits_{j=1}^{3}T_{j}(t)-\int_{0}^{t}e^{-h(s)}h'(s)\| u_{n}(s)\|_{H^{0,1}}^{2}ds\\
&+\int_{0}^{t}e^{-h(s)}[-2\| \partial_{1}u_{n}(s)\|_{L^{2}}^{2}-2\| \partial_{1}\partial_{2}u_{n}(s)\|_{L^{2}}^{2}]ds,
\end{split}
\end{equation}
where
\beno
\begin{split}
T_{1}(t)=&-2\int_{0}^{t}e^{-h(s)}\langle \partial_{2}(u_{n}\cdot\nabla u_{n}),\partial_{2}u_{n}(s)\rangle,\\
T_{2}(t)=&2\int_{0}^{t}e^{-h(s)}(\sigma(s,u_{n}(s))dW_{n}(s),u_{n}(s))_{H^{0,1}},\\
T_{3}(t)=&\int_{0}^{t}e^{-h(s)}\| P_{n}\sigma(s,u_{n}(s))\Pi_{n}\|_{L^{2}(\ell^2,H^{0,1})}^{2}\,ds.
\end{split}
\eeno
The growth condition implies that
\begin{equation}\label{S4eq14}
T_{3}(t)\leq \int_{0}^{t}e^{-h(s)}\bigl[\tilde{K_{0}}+\tilde{K_{1}}\| u_{n}(s)\|_{H^{0,1}}^{2}+\tilde{K_{2}}\| (\partial_{1}u_{n}(s)\|_{L^{2}}^{2}+\| \partial_{1}\partial_{2}u_{n}(s)\|_{L^{2}}^{2})\bigr]\,ds.
\end{equation}
For $T_{1}(t)$, we use \eqref{S4eq10}:
\begin{equation}\label{S4eq15}
\mid T_{1}(t)\mid\leq \int_{0}^{t}e^{-h(s)}\bigl[2\tilde{\alpha}\| \partial_{1}\partial_{2}u_{n}\|_{L^{2}}^{2}+2C(\tilde{\alpha})\| \partial_{1}u_{n}\|_{L^{2}}^{2}\| \partial_{2}u_{n}\|_{L^{2}}^{2}\bigr]\,ds.
\end{equation}
Similar to \eqref{S4eq7}, we have
\begin{equation}\label{S4eq16}
\begin{split}
&E\Bigl(\sup\limits_{s\leq t}\mid 2\int_{0}^{s}e^{-h(r)}(\sigma(r,u_{n}(r))dW_{n}(r),u_{n}(r))_{H^{0,1}}\mid\Bigr)\\
&\leq 4E\Bigl\{\int_{0}^{t}e^{-h(r)}\| P_{n}\sigma(r,u_{n})(r)\Pi_{n}\|_{L^{2}(\ell^2,H^{0,1})}^{2}e^{-h(r)}\| u_{n}(r) \|_{H^{0,1}}^{2}dr\Bigr\}^{\frac{1}{2}}\\
&\leq \tilde{\beta} E(\sup\limits_{s\leq t}(e^{-h(s)} \| u_{n}(s) \|_{H^{0,1}}^{2}))\\
&+\frac{4}{\tilde{\beta}}E\int_{0}^{t}e^{-h(s)}\bigl[\tilde{K_{0}}+\tilde{K_{1}}\| u_{n}(s) \|_{H^{0,1}}^{2}+\tilde{K_{2}}(\| \partial_{1}u_{n}(s)\|_{L^{2}}^{2}+\| \partial_{1}\partial_{2}u_{n}(s)\|_{L^{2}}^{2})\bigr]\,ds.
\end{split}
\end{equation}
Combining \eqref{S4eq13}-\eqref{S4eq16} and dropping some negative terms, we have:
\begin{equation*}
\begin{split}
&(1-\tilde{\beta})E(\sup\limits_{t\in[0,T]}e^{-h(t)}\| u_{n}(t)\|_{H^{0,1}}^{2})\\
&\leq E\| P_{n}u_{0}\|_{H^{0,1}}^{2}+E\int_{0}^{T}e^{-h(s)}(1+\frac{4}{\tilde{\beta}})(\tilde{K_{0}}+\tilde{K_{1}}\| u_{n}(s) \|_{H^{0,1}}^{2})ds\\
\end{split}
\end{equation*}
By Gronwall's inequality,
\begin{equation}\label{S4eq17}
E(\sup\limits_{t\in[0,T]}(e^{-h(t)}\| u_{n}(t)\|_{H^{0,1}}^{2}))\leq C(T,\tilde{K_{0}},\tilde{K_{1}},\tilde{K_{2}})\| u_{0}\|_{H^{0,1}}^{2}
\end{equation}
Combining \eqref{S4eq13}-\eqref{S4eq16} again with the estimate \eqref{S4eq17} we obtain:
\begin{equation*}
\begin{split}
E(\sup\limits_{t\in[0,T]}(e^{-h(t)}\| u_{n}(t)\|_{H^{0,1}}^{2}))+&E\int_{0}^{T}e^{-h(t)} \| u_{n}(t)\|_{H^{1,1}}^{2}dt\\
\leq &C(\tilde{K_{0}},\tilde{K_{1}},\tilde{K_{2}},T)(1+E\| u_{0}\|_{H^{0,1}}^{2}).
\end{split}
\end{equation*}
\qed

The classical Skorokhod Theorem can only be used in metric space. We will use the following Jakubowski's  version of the Skorokhod Theorem in the form given by Brze\'{z}niak and Ondrej\'{a}t \cite{brzezniak2013stochastic} Thm A.1. and it was proved by A. Jakubowski in \cite{Jakubowski1998Short}.
\begin{thm}
{\sl Let $\mathcal{Y}$ be a topological space such that there exists a sequence ${f_{m}}$ of continuous functions $ f_{m}:\mathcal{Y}\rightarrow \mathbb{R}$ that separates points of $\mathcal{Y}$. Let us denote by $\mathcal{S}$ the $\sigma$-algebra generated by the maps ${f_{m}}$. Then
\begin{itemize}
\item[(j1)] every compact subset of  $\mathcal{Y}$ is metrizable;

\item[(j2)] if $(\mu_{m})$ is tight sequence of probability measures on$(\mathcal{Y},\mathcal{S})$, then there exists a subsequence $(m_{k})$, a probability space $(\Omega , \mathcal{F},P)$ with $\mathcal{Y}$-valued Borel measurable variables $\xi_{k}$,$\xi$ such that $\mu_{m_{k}}$ is the law of $\xi_{k}$ and $\xi_{k}$ converges to $\xi$ almost surely on $\Omega$. Moreover, the law of $\xi$ is a Radon measure.
    \end{itemize}}
\end{thm}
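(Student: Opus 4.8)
The plan is to realise $\cY$ as a subspace of a compact metric space and then reduce everything to the classical Skorokhod representation theorem on that space. First I would set $g_m:=\arctan\circ f_m$ and assemble $g:=(g_m)_{m\geq1}\colon\cY\to Q$, where $Q:=[-\pi/2,\pi/2]^{\N}$ carries the product topology; then $Q$ is compact and metrizable, $g$ is continuous, $g$ is injective (the $f_m$ separate points and $\arctan$ is strictly increasing), and $g$ is measurable from $(\cY,\cS)$ to $(Q,\cB(Q))$ since each $g_m$ is $\cS$-measurable and $\cB(Q)$ is generated by the coordinate maps. Part (j1) then follows at once: for a compact $K\subseteq\cY$ the map $g|_K\colon K\to g(K)\subseteq Q$ is a continuous bijection from a compact space onto a subspace of a Hausdorff space, hence a homeomorphism, so $K$ is homeomorphic to a subset of the metrizable space $Q$ and is therefore metrizable.

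For (j2) I would push the measures forward, setting $\nu_m:=g_*\mu_m\in\cP(Q)$. Since $Q$ is compact, $(\nu_m)$ is tight, so Prokhorov's theorem gives a subsequence $(m_k)$ and $\nu\in\cP(Q)$ with $\nu_{m_k}\to\nu$ weakly. Using the tightness of $(\mu_m)$, I would choose compacts $K_j\subseteq\cY$ with $K_j\subseteq K_{j+1}$ and $\sup_m\mu_m(\cY\setminus K_j)\leq 2^{-j}$, and put $\Sigma:=\bigcup_jK_j$. Injectivity of $g$ gives $g^{-1}(g(K_j))=K_j$ (so each $K_j$ belongs to $\cS$), whence $\nu_m(g(K_j))=\mu_m(K_j)\geq 1-2^{-j}$; as $g(K_j)$ is compact, hence closed in $Q$, the Portmanteau theorem yields $\nu(g(K_j))\geq 1-2^{-j}$, so $\nu(g(\Sigma))=1$, and likewise $\mu_m(\Sigma)=\nu_m(g(\Sigma))=1$ for every $m$. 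Applying the classical Skorokhod theorem on the Polish space $Q$ then furnishes a probability space $(\Omega,\cF,P)$ and $Q$-valued random variables $Y_k,Y$ with $\mathrm{law}(Y_k)=\nu_{m_k}$, $\mathrm{law}(Y)=\nu$, and $Y_k\to Y$ $P$-a.s.

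To transport these back to $\cY$, I would use the inverse $h:=(g|_\Sigma)^{-1}\colon g(\Sigma)\to\cY$, which is Borel: for open $V\subseteq\cY$ and each $j$ one has $h^{-1}(V)\cap g(K_j)=g(V\cap K_j)=g(K_j)\setminus g(K_j\setminus V)$, relatively open in $g(K_j)$ because $g(K_j\setminus V)$ is compact, so $h^{-1}(V)=\bigcup_jg(V\cap K_j)$ is Borel in $g(\Sigma)$. Fixing $y_0\in\cY$, I would set $\xi_k:=h(Y_k)$ on $\{Y_k\in g(\Sigma)\}$ and $\xi_k:=y_0$ otherwise (and $\xi$ analogously from $Y$); these are $\cY$-valued measurable maps. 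Since $h\circ g=\mathrm{id}$ on $\Sigma$, $\mu_{m_k}(\Sigma)=1$, and $P(Y_k\notin g(\Sigma))=0$, one checks directly that $\mathrm{law}(\xi_k)=h_*\nu_{m_k}=\mu_{m_k}$; similarly $\mathrm{law}(\xi)=h_*\nu$, which is Radon on the (Hausdorff) space $\cY$ because $h_*\nu(K_j)=\nu(g(K_j))\geq 1-2^{-j}$ forces inner regularity by compact sets.

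The hard part will be the last step, namely upgrading $Y_k\to Y$ a.s.\ in $Q$ to $\xi_k\to\xi$ $P$-a.s.\ in $\cY$: the map $h$ is only Borel, not continuous, and $g$ need not be a topological embedding of $\Sigma$ into $Q$, so convergence of $g(\xi_k)=Y_k$ does not by itself force convergence of $\xi_k$. The resolution is to exploit that $Y$ is $P$-a.s.\ valued in the $\sigma$-compact set $g(\Sigma)=\bigcup_jg(K_j)$, on each compact piece of which $h$ coincides with the genuine homeomorphism $(g|_{K_j})^{-1}$ supplied by (j1), and then to control, off a $P$-null set, how the a.s.-convergent sequence $(Y_k)$ sits relative to the exhaustion $(g(K_j))_j$, so that the limit and a tail of the sequence lie in one common compact piece on which continuity of the pull-back is automatic. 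This is precisely the technical core of Jakubowski's proof \cite{Jakubowski1998Short} (see also \cite{brzezniak2013stochastic}), and I would import that step rather than redo it.
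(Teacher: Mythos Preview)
The paper does not prove this theorem at all: it is quoted verbatim as Jakubowski's Skorokhod theorem, with a reference to \cite{Jakubowski1998Short} for the proof and to \cite{brzezniak2013stochastic} for the formulation, and is then used as a black box. So there is no ``paper's own proof'' to compare against.

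Your sketch is essentially the standard embedding argument that underlies Jakubowski's original proof: map $\cY$ continuously and injectively into the metrizable cube $Q=[-\pi/2,\pi/2]^{\N}$ via $(\arctan\circ f_m)_m$, get (j1) from the fact that a continuous bijection from a compact space to a Hausdorff space is a homeomorphism, and for (j2) push the tight family to $Q$, apply Prokhorov and the classical Skorokhod theorem there, and then pull back along the Borel inverse on a $\sigma$-compact set of full mass. This is correct in outline and goes well beyond what the paper provides. You also locate the genuine difficulty accurately: the inverse $h$ is only Borel, so a.s.\ convergence of $Y_k$ in $Q$ does not automatically yield a.s.\ convergence of $\xi_k$ in $\cY$, and one must arrange (via a further subsequence and a Borel--Cantelli/diagonal argument against the exhaustion $(K_j)$) that a tail of the sequence and the limit live in a common compact on which $g$ is a homeomorphism. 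Since you explicitly defer that step to \cite{Jakubowski1998Short}, your write-up is an honest and accurate reduction rather than a complete proof --- which is already more than the paper attempts.
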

Now we check the $\mathcal{X}$ defined in Lemma 4.3 satisfies the above condition.
It is sufficient to prove that on each space appearing in the definition of $\mathcal{X}$ there exists a countable set of continuous real-valued functions separating points:\\
Since $C([0,T];H^{-1})$ and $L^{2}([0,T];H)$ are separable Banach spaces, it is easy to see the condition in Thm. 4.3 is satisfied. \\
For the space $L_{w}^{2}([0,T];H^{1,1})$ it is sufficient to put
$$f_{m}(u):=\int_{0}^{T}(u(t),v_{m}(t))_{H^{1,1}}dt\in\mathbb{R}, u\in L_{w}^{2}([0,T];H^{1,1}), m\in \mathbb{N},$$
where $v_{m}$ is a dense subset of $L^{2}([0,T];H^{1,1})$.\\
Similarly for the space $L_{w^*}^{\infty}([0,T];H^{0,1})$ it is sufficient to put
$$f_{m}(u):=\int_{0}^{T}(u(t),v_{m}(t))_{H^{0,1}}dt\in\mathbb{R}, u\in L_{w^*}^{\infty}([0,T];H^{0,1}), m\in \mathbb{N},$$
where $v_{m}$ is a dense subset of $L^{1}([0,T];H^{0,1})$.\\
Now all the conditions of the above Skorokhod theorem are satisfied. By Thm 4.3, there exists another probability space $(\tilde{\Omega},\tilde{\mathcal{F}},\tilde{\mathbb{P}})$ and a subsequence ${\hat{P}_{n_{k}}}$ as well as random variables ${\tilde{u}_{n_{k}}}$ in the space $(\tilde{\Omega},\tilde{\mathcal{F}},\tilde{\mathbb{P}})$, such that
\begin{itemize}
\item[(i)] ${\tilde{u}_{n_{k}}}$ has the law $\hat{P}_{n_{k}}$;
\item[(ii)] $\hat{P}_{n_{k}}$ converges weakly to some $\hat{P}$;
\item[(iii)] ${\tilde{u}_{n_{k}}}\rightarrow \tilde{u}$ in $\mathcal{X}$   $ \tilde{\mathbb{P}}-a.s.$ and $\tilde{u}$ has the law $\hat{P}\in \mathcal{P}(C([0,T]; H^{-1}))$.
\end{itemize}
\begin{Remark}
Since $\tilde{u}_{n_{k}}$ has the same law as $u_{n_{k}}$,
we immediately have:
$$E^{\tilde{\mathbb{P}}}(\sup\limits_{t\in[0,T]}\| \tilde{u}(t) \|_{L^{2}}^{2})+E^{\tilde{\mathbb{P}}}\int_{0}^{T}\|  \tilde{u}(t) \|_{H^{1,0}}^{2}dt\leq C(1+E^{\tilde{\mathbb{P}}}\| u_{0} \|_{L^{2}}^{2}).$$
\begin{equation}\label{S4eq23}
E^{\hat{P}}(\sup\limits_{t\in[0,T]}\| u(t) \|_{L^{2}}^{2})+E^{\hat{P}}\int_{0}^{T}\| u(t) \|_{H^{1,0}}^{2}dt\leq C(1+E^{\hat{P}}\| u_{0}\|_{L^{2}}^{2}).
\end{equation}
Similarly for $L^{4}(\Omega)$ estimates, we have:
\begin{equation*}
E^{\tilde{\mathbb{P}}}(\sup\limits_{t\in[0,T]}\| \tilde{u}(t)\|_{L^{2}}^{4})+E^{\tilde{\mathbb{P}}}\int_{0}^{T}\| \tilde{u}(t)\|_{L^{2}}^{2}\| \tilde{u}(t)\|_{H^{1,0}}^{2}dt \leq C(1+E^{\tilde{\mathbb{P}}}\| u_{0} \|_{L^{2}}^{4}).
\end{equation*}
\begin{equation}\label{S4eq46}
E^{\hat{P}}(\sup\limits_{t\in[0,T]}\| u(t)\|_{L^{2}}^{4})+E^{\hat{P}}\int_{0}^{T}\| u(t)\|_{L^{2}}^{2}\| u(t)\|_{H^{1,0}}^{2}dt \leq C(1+E^{\hat{P}}\| u_{0} \|_{L^{2}}^{4}).
\end{equation}
\end{Remark}

\subsection{Pass to the Limit and the proof of main theorems.}
In this section we pass the limit as $n\rightarrow \infty$.

 \begin{proof}[Proof of Theorem \ref{thm2}]
 Let us prove $\hat{P}$ satisfies (M1),(M2) and (M3).\\
 (M3) is satisfied by \eqref{S4eq23} .\\
 For (M1), noting that $u_{n}(0)\rightarrow u_{0}\text{  } in \text{  } H  $, we have:
 $$\hat{P}(u(0)=u_{0})=\tilde{\mathbb{P}}(\tilde{u}(0)=u_{0})=\lim\limits_{n\rightarrow \infty } \tilde{\mathbb{P}}(\tilde{u}_{n}(0)=P_{n}u_{0})=1, $$
\begin{equation*}
\begin{split}
 &\hat{P}\Bigl\{u\in C([0,T],H^{-1}):\int_{0}^{T}\| F(u(s))\|_{H^{-1}}ds+\int_{0}^{T}\| \sigma(s,u(s))\|_{L^{2}(\ell^2,H)}^{2}ds<+\infty\Bigr\}\\
 &=\tilde{\mathbb{P}}
 \Bigl\{\tilde{u}\in C([0,T], H^{-1}):\int_{0}^{T}\| F(\tilde{u}(s))\|_{H^{-1}}ds+\int_{0}^{T}\| \sigma(s,\tilde{u}(s))\|_{L^{2}(\ell^2,H)}^{2}ds<+\infty\ \Bigr\}.
 \end{split}
 \end{equation*}
 Since $$\tilde{u}_{n}\rightarrow \tilde{u}\text{ }  \text{in}\text{ }  \mathcal{X} \text{ } \tilde{\mathbb{P}}- a.s. ,$$
 $$\tilde{u}\in L^{2}([0,T], H^{1,1})\cap L^{\infty}([0,T],H^{0,1}) \text{ }  \tilde{\mathbb{P}}-a.s..$$
  Thus by the growth condition of $\sigma$, we have\\
 $\int_{0}^{T}\| \sigma(s,\tilde{u}(s)) \|_{L^{2}(\ell^2,H)}^{2}ds\leq
 \int_{0}^{T}(K_{0}+K_{1}\| \tilde{u} \|_{L^{2}}^{2}+ K_{2}\| \partial_{1}\tilde{u} \|_{L^{2}}^{2})ds<\infty$,   $ \tilde{\mathbb{P}}-a.s.$

Again, we know by interpolation, $\tilde{u}\in L^{4}([0,T],H^{\frac{1}{2}})$ $\tilde{\mathbb{P}}-a.s.$ By Sobolev imbedding, $\tilde{u}\in L^{4}([0,T],{L^{4}})$ $\tilde{\mathbb{P}}-a.s.$ Hence $\dive(\tilde{u}\otimes \tilde{u})\in L^{2}([0,T],H^{-1})$ $\tilde{\mathbb{P}}-a.s.$ And $\partial_{1}^{2}\tilde{u}\in
 L^{1}([0,T],H^{-1})$, $F(\tilde{u}(s))\in L^{1}([0,T],H^{-1})$ $\tilde{\mathbb{P}}-a.s.$
Thus (M1) is satisfied.\\
Finally we prove (M2):\\

Set
\beno
\begin{split}
G^{(1)}(t,u)&:=\langle u(t),l \rangle;\\
G^{(2)}(t,u)&:=\int_{0}^{t}\langle F(u(s)),l \rangle \,ds.
\end{split}
\eeno
Since $\tilde{u}_{n}\rightarrow \tilde{u}$ in $C([0,T],H^{-1})$ and $l\in C^{1}(\mathbb{T}^{2})$, we have for $\tilde{\mathbb{P}}-a.s.$
$$\| \langle \tilde{u}_{n}(t),l \rangle-\langle \tilde{u}(t),l \rangle\|_{L^{\infty}(0,T)} \rightarrow 0                                              $$
Moreover, since $\tilde{u}_{n}$ is bounded in $L^{4}(\Omega, L^{\infty}([0,T],L^{2}))$,
$\langle \tilde{u}_{n}(t),l \rangle$ is bounded in $L^{4}(\Omega)$ for any $t$, we have
$$\lim\limits_{n\rightarrow \infty}E^{\tilde{\mathbb{P}}}\mid G^{(1)}(t,\tilde{u}_{n})-G^{(1)}(t,\tilde{u})\mid=0$$
For $G^{(2)}$, since
 $$ \tilde{u}_{n}\rightarrow \tilde{u} \text{  in  } L^{2}([0,T],L^{2})\text{  } \tilde{\mathbb{P}}-a.s.,$$
 we have
$$\int_{0}^{t}\langle F(\tilde{u}_{n}(s)),l \rangle \,ds  \rightarrow  \int_{0}^{t}\langle F(\tilde{u}(s)),l \rangle\, ds    \ \                         \tilde{\mathbb{P}}-a.s., l\in C^{1}(\mathbb{T}^{2}).$$
Moreover, since $ \tilde{u}_{n}$ is bounded in $L^{4}(\Omega, L^{\infty}([0,T],L^{2}))\cap L^{2}(\Omega, L^{2}([0,T],H^{1})) $ and $l\in C^{1}(\mathbb{T}^{2})$, we have
$$E^{\tilde{\mathbb{P}}}\Big(\int_{0}^{T}\mid \langle F(\tilde{u}_{n}(s)),l \rangle\mid ds\Big)^2\leq C.$$
Therefore, we have
$$\lim\limits_{n\rightarrow \infty}E^{\tilde{\mathbb{P}}}\mid G^{(2)}(t,\tilde{u}_{n})-G^{(2)}(t,\tilde{u})\mid=0.$$
By the definition of $M_{l}$ in (M2), we have
\begin{equation}\label{S4eq24}
\lim\limits_ {n\rightarrow \infty}E^{\tilde{\mathbb{P}}}\mid M_{l}(t,\tilde{u}_{n})-M_{l}(t,\tilde{u})\mid=0.
\end{equation}
Let $t>s$ and $g$ be any bounded and real-valued $\mathcal{F}_{s}$-measurable continuous function on $\mathcal{X}$. Using \eqref{S4eq24} we have:
\begin{equation*}
\begin{split}
E^{\hat{P}}((M_{l}(t,u)-M_{l}(s,u))g(u))&=
E^{\tilde{\mathbb{P}}}((M_{l}(t,\tilde{u})-M_{l}(s,\tilde{u}))g(\tilde{u}))\\
&=\lim\limits_ {n\rightarrow \infty}E^{\tilde{\mathbb{P}}}
((M_{l}(t,\tilde{u}_{n})-M_{l}(t,\tilde{u}_{n}))g(\tilde{u}_{n}))\\
&=\lim\limits_ {n\rightarrow \infty}E^{\hat{P}_{n}}((M_{l}(t,u)-M_{l}(s,u))g(u))\\
&=0,
\end{split}
\end{equation*}
where the last step is due to (M2) for $\hat{P}_{n}$.

Then we have
\begin{equation}\label{S4eq41}
E^{\hat{P}}(M_{l}(t,u)\mid \mathcal{F}_{s})=M_{l}(s,u).
\end{equation}
On the other hand, by Burkholder-Davis-Gundy's inequality, growth condition of $\sigma$ and Lemma 4.1, Lemma 4.2 we have:
\begin{equation*}
\begin{split}
\sup\limits_{n}E^{\tilde{\mathbb{P}}}\mid M_{l}(t,\tilde{u}_{n})\mid^{4}
&\leq C \sup\limits_{n}E^{\tilde{\mathbb{P}}}(\int_{0}^{t}\| \sigma^{*}(\tilde{u}_{n}(s))(l)\|_{\ell^{2}}^{2}\,ds)^{2}\\
&\leq C \sup\limits_{n}\int_{0}^{t}E^{\tilde{\mathbb{P}}}(\| \sigma^{*}(\tilde{u}_{n}(s))(l)\|_{\ell^{2}}^{4})\,ds\\
&\leq C \sup\limits_{n}E^{\tilde{\mathbb{P}}}\int_{0}^t\|\sigma^{*}(s,\tilde{u}_{n})\|_{L^2(H^{1},\ell^{2})}^4
ds\|l\|_{H^1}^4\\
&= C \sup\limits_{n}E^{\tilde{\mathbb{P}}}\int_{0}^t\|\sigma(s,\tilde{u}_{n})\|_{L^2(\ell^{2},H^{-1})}^4
ds\|l\|_{H^1}^4\\
&\leq C\sup\limits_{n}E^{\tilde{\mathbb{P}}}\int_{0}^t(K'_{0}+K'_{1}\| \tilde{u}_{n} \|_{L^{2}}^{2})^{2}
ds\\
&<+\infty,
\end{split}
\end{equation*}
where the third inequality is due to the reason that the normal norm of the operator
is smaller than the Hilbert-Schmidt norm of the operator, the fourth inequality is
a result of $\|\sigma\|_{L^2(\ell^{2},H^{-1})}=\|\sigma^{*}\|_{L^2(H^1,\ell^{2})}$
.\\
Then by \eqref{S4eq24} we obtain
$$\lim\limits_{n\rightarrow \infty}E^{\tilde{\mathbb{P}}}\mid M_{l}(t,\tilde{u}_{n})- M_{l}(t,\tilde{u})\mid^{2}=0.$$
On the other hand, by Lipchitz condition of $\sigma$,
$$\lim\limits_{n\rightarrow \infty}E^{\tilde{\mathbb{P}}} \int_{0}^{t}\|
\sigma^{*}(s,\tilde{u}_{n}(s))(l)-\sigma^{*}(s,\tilde{u}(s))(l)\|_{\ell^{2}}^{2}
ds=0.$$
Thus, using the same method used for proving $E^{\hat{P}}(M_{l}(t,u)\mid \mathcal{F}_{s})=M_{l}(s,u)$, we obtain
$$E^{\hat{P}}(M_{l}^{2}(t,u)-\int_{0}^{t}\|\sigma^{*}(s,u(s))(l)\|
_{l^{2}}^{2}ds\mid \mathcal{F}_{s})=M_{l}^{2}(s,u)-\int_{0}^{s} \|\sigma^{*}(r,u(r))(l)\|
_{l^{2}}^{2}dr.$$
(M2) holds.\\
The results follow.
\end{proof}\\

Finally let us turn to the proof of  the pathwise uniqueness.

 \begin{proof}[Proof of Theorem \ref{thm3}]
Set
$$\tilde{w}:=u-v.$$
Then we have
\begin{equation}\label{S4eq25}
\langle u(t),e_{k}\rangle =\langle u(0),e_{k}\rangle +\int_{0}^{t}\langle-u\cdot\nabla u+\partial_{1}^{2}u,e_{k})ds+\int_{0}^{t}\langle \sigma(s,u(s))dW(s),e_{k}\rangle,
\end{equation}
and
\begin{equation}\label{S4eq26}
\langle v(t),e_{k}\rangle =\langle v(0),e_{k}\rangle +\int_{0}^{t}\langle-v\cdot\nabla v+\partial_{1}^{2}v,e_{k})ds+\int_{0}^{t}\langle \sigma(s,v(s))dW(s),e_{k}\rangle .
\end{equation}
\eqref{S4eq25}-\eqref{S4eq26} ensures that
\begin{equation}\label{S4eq27}
\langle \tilde{w}(t),e_{k}\rangle = \int_{0}^{t}\langle-\tilde{w}\cdot\nabla u+v\cdot\nabla \tilde{w}+\partial_{1}^{2}\tilde{w},e_{k}\rangle ds+\int_{0}^{t}\langle \sigma(s,u(s))- \sigma(s,v(s))dW(s),e_{k}\rangle .
\end{equation}
Set $\varphi_{k}:=\langle \tilde{w}(s), e_{k}\rangle$. It\^{o}'s formula and \eqref{S4eq27} yield:
\begin{equation*}
\begin{split}
d\varphi_{k}^{2}&=2\varphi_{k}d\varphi_{k}+\| (\sigma(s,u(s))- \sigma(s,v(s)))^{*}e_{k}\|_{l^{2}}^{2}ds\\
&=2\langle \tilde{w}(s),e_{k}\rangle\langle-\tilde{w}\cdot\nabla u+v\cdot\nabla \tilde{w}+\partial_{1}^{2}\tilde{w},e_{k}\rangle ds\\
&+2\langle \tilde{w}(s),e_{k}\rangle\langle \bigl(\sigma(s,u(s))- \sigma(s,v(s))\bigl)dW(s),e_{k}\rangle\\
&+\| \bigl(\sigma(s,u(s))- \sigma(s,v(s))\bigl)^{*}e_{k}\|_{l^{2}}^{2}ds.
\end{split}
\end{equation*}
Since $L_{2}<\frac{2}{5}$, we can choose $0<\hat{\alpha}<1$ and $0<\hat{\beta}<1$, such that $L_{2}+2\hat{\alpha}+\frac{4}{\hat{\beta}}L_{2}<2$.
From the calculation in the uniqueness part of deterministic cases (the proof of Thm. 3.1),  we know that
\begin{equation}\label{S4eq42}
\begin{split}
&\mid(\tilde{w}\cdot \nabla u\mid \tilde{w})\mid\\
&\leq\| \tilde{w}\cdot \nabla u \|_{L^{2}}\| \tilde{w} \|_{L^{2}}\\
&\leq\hat{\alpha}\| \partial_{1}\tilde{w} \|_{L^{2}}^{2}+C(\hat{\alpha})\bigl(\| \partial_{1}u \|_{L^{2}}^{\frac{2}{3}}\| \partial_{1}\partial_{2}u \|_{L^{2}}^{\frac{2}{3}}
+\| \partial_{2}u \|_{L^{2}}^{\frac{2}{3}}\| \partial_{1}\partial_{2}u \|_{L^{2}}^{\frac{2}{3}}\bigr)
\| \tilde{w} \|_{L^{2}}^{2}.
\end{split}
\end{equation}
Set $q(t):=\int_{0}^{t}2C(\hat{\alpha})(\| \partial_{1}u\|_{L^{2}}^{\frac{2}{3}}+\| \partial_{2}u\|_{L^{2}}^{\frac{2}{3}})\| \partial_{1}\partial_{2}u\|_{L^{2}}^{\frac{2}{3}}ds$\\
By It\^{o}'s formula:
\begin{equation}\label{S4eq28}
\begin{split}
e^{-q(t)}\varphi_{k}^{2}=\varphi (0)^{2}
&+2\int_{0}^{t}e^{-q(s)}\langle \tilde{w}(s),e_{k}\rangle\langle-\tilde{w}\cdot\nabla u+v\cdot\nabla \tilde{w}+\partial_{1}^{2}\tilde{w},e_{k}\rangle ds\\
&+2\int_{0}^{t}e^{-q(s)}\langle \tilde{w}(s),e_{k}\rangle\langle \bigl(\sigma(s,u(s))- \sigma(s,v(s))\bigl)dW(s),e_{k}\rangle\\
&+\int_{0}^{t}e^{-q(s)}\| (\sigma(s,u(s))- \sigma(s,v(s)))^{*}e_{k}\|_{l^{2}}^{2}ds-\int_{0}^{t}q'(s)e^{-q(s)}\varphi_{k}^{2}ds.
\end{split}
\end{equation}
Notice that
\begin{equation}\label{S4eq29}
\sum\limits_{k=1}^{\infty}\varphi_{k}^{2}=\| \tilde{w}(t)\|_{L^{2}}^{2}.
\end{equation}

The dominated convergence theorem imply when $N\rightarrow \infty$:
\begin{equation*}
\begin{split}
&2\mid \sum\limits_{k\leq N}\int_{0}^{t}e^{-q(s)}\langle \tilde{w}(s),e_{k}\rangle\langle-\tilde{w}\cdot\nabla u,e_{k}\rangle ds\mid\\
&\longrightarrow 2\mid\int_{0}^{t}e^{-q(s)}\langle \tilde{w}\cdot\nabla \tilde{w} ,u\rangle ds\mid\\
\end{split}
\end{equation*}
since by \eqref{S4eq42},
\begin{equation}\label{S4eq30}
\begin{split}
 &\int_{0}^{t}\| \tilde{w}\cdot \nabla u \|_{L^{2}}\| \tilde{w} \|_{L^{2}}ds\\
&\leq  \int_{0}^{t}2\hat{\alpha}e^{-q(s)}\| \partial_{1}\tilde{w} \|_{L^{2}}^{2}
+2C(\hat{\alpha})e^{-q(s)}(\| \partial_{1}u \|_{L^{2}}^{\frac{2}{3}}\| \partial_{1}\partial_{2}u \|_{L^{2}}^{\frac{2}{3}}
+\| \partial_{2}u \|_{L^{2}}^{\frac{2}{3}}\| \partial_{1}\partial_{2}u \|_{L^{2}}^{\frac{2}{3}}
)\| \tilde{w} \|_{L^{2}}^{2}ds\\
&=\int_{0}^{t}[2\hat{\alpha}e^{-q(s)}\| \partial_{1}\tilde{w} \|_{L^{2}}^{2}+e^{-q(s)}q'(s)\| \tilde{w} \|_{L^{2}}^{2}]ds.
\end{split}
\end{equation}

Notice that
$$\|  (\sum\limits_{k\leq N} \langle \tilde{w}(s),e_{k}\rangle e_{k})\|_{L^{2}}\|\leq \|  \tilde{w}(s) \|_{L^{2}}.$$
Now we follow the same calculation of \eqref{S3eq30} and \eqref{S3eq31}:
\begin{equation*}
\begin{split}
&\mid e^{-q(s)}\langle v\cdot \nabla \tilde{w} , \sum\limits_{k\leq N} \langle \tilde{w}(s),e_{k}\rangle e_{k}\rangle\mid \\
&\leq  e^{-q(s)}\bigl( \| v \|_{L^{2}}^{\frac{1}{2}}\| \partial_{1}v \|_{L^{2}}^{\frac{1}{2}}
\| \partial_{1}\tilde{w} \|_{L^{2}}^{\frac{1}{2}}\| \partial_{1}\partial_{2}\tilde{w} \|_{L^{2}}^{\frac{1}{2}}
+\| v \|_{L^{2}}^{\frac{1}{2}}\| \partial_{1}v \|_{L^{2}}^{\frac{1}{2}}
\| \partial_{2}\tilde{w} \|_{L^{2}}^{\frac{1}{2}}\| \partial_{1}\partial_{2}\tilde{w} \|_{L^{2}}^{\frac{1}{2}}\bigr)\| w \|_{L^{2}}.
\end{split}
\end{equation*}
Since the latter $\in L^1 ([0,T])$ for $t$,
we use dominated convergence theorem again and obtain:
\begin{equation}\label{S4eq31}
2\sum\limits_{k\leq N}\int_{0}^{t}e^{-q(s)}\langle \tilde{w}(s),e_{k}\rangle\langle v\cdot\nabla \tilde{w},e_{k}\rangle ds \longrightarrow 0 \text{  as $N\rightarrow\infty$ }.
\end{equation}
Similarly, by dominated convergence theorem,
\begin{equation}\label{S4eq32}
2\sum\limits_{k\leq N}\int_{0}^{t}e^{-q(s)}\langle \tilde{w}(s),e_{k}\rangle \langle\partial_{1}^{2}\tilde{w},e_{k}\rangle ds
 \longrightarrow -2\int_{0}^{t}e^{-q(s)}\| \partial_{1}\tilde{w}\|_{L^{2}}^{2}ds
 \text{  as $N\rightarrow\infty$ },
\end{equation}
and
\begin{equation}\label{S4eq33}
\begin{split}
&\sum\limits_{k\leq N}\int_{0}^{t}e^{-q(s)}\| (\sigma(s,u(s))-\sigma(s,v(s)))^{*}e_{k}\|_{l^{2}}^{2}ds\\
&\longrightarrow \int_{0}^{t}e^{-q(s)}\| \sigma(s,u(s))-\sigma(s,v(s))\|_{L^{2}(\ell^2,H)}^{2}ds\\
&\leq \int_{0}^{t}e^{-q(s)}(L_{1}\| \tilde{w}\|_{L^{2}}^{2}+L_{2}\|
\partial_{1}\tilde{w}\|_{L^{2}}^{2} )ds.
\end{split}
\end{equation}
Since $L_{2}<\frac{2}{5}$, we can choose $0<\hat{\beta}<1$, such that $\frac{4}{\hat{\beta}}L_{2}+L_{2}<2$.
By Burkholder-Davis-Gundy's inequality as well as the dominated convergence theorem, we deduce
\begin{equation}\label{S4eq34}
\begin{split}
&2\mid E(\sup\limits_{0\leq s \leq t} \int_{0}^{s}\sum\limits_{k\leq N} e^{-q(r)}\langle \tilde{w}(r),e_{k}\rangle\langle (\sigma(r,u(r))- \sigma(r,v(r)))dW(r),e_{k}\rangle)\mid \\
&\leq 4 E\Bigl(\int_{0}^{t}e^{-2q(s)}\|\sigma(s,u(s))- \sigma(s,v(s))\|_{L^2(\ell^2,H)}^2\| \tilde{w}\|_{L^{2}}^2 \Bigl)^{\frac{1}{2}}\\
&\leq \hat{\beta}E(\sup\limits_{s\leq t}(e^{-q(s)} \| \tilde{w}(s) \|_{L^{2}}^{2}))\\
&+\frac{4}{\hat{\beta}}E\int_{0}^{t}e^{-q(s)}(L_{1}\| \tilde{w}(s) \|_{L^{2}}^{2}+L_{2}\| \partial_{1}\tilde{w}(s)\|_{L^{2}}^{2})ds.
\end{split}
\end{equation}
Combining \eqref{S4eq28}-\eqref{S4eq34} and dropping some negative terms, we obtain:
$$(1-\hat{\beta})E(\sup\limits_{0\leq s \leq t}e^{-q(t)}\| \tilde{w}\|_{L^{2}}^{2})\leq E\int_{0}^{t}(1+\frac{4}{\hat{\beta}})L_{1}e^{-q(s)}\| \tilde{w}(s)\|_{L^{2}}^{2}ds.$$
By Gronwall's inequality we obtain $\tilde{w}=0$  $\tilde{\mathbb{P}}-a.s.$
\end{proof}

\bigbreak \noindent {\bf Acknowledgments.}  We would like to thank Professor Zhiming Ma for constant  encouragement and profitable guidance.
  P. Zhang is partially supported
    by NSF of China under Grants   11371347 and 11688101,  and innovation grant from National Center for
    Mathematics and Interdisciplinary Sciences of the Chinese
Academy of Sciences .
R. Zhu is supported in part by NSFC (11771037, 11671035, 11371099). Financial support by the DFG through the CRC 1283 "Taming uncertainty and profiting from randomness and low regularity in analysis, stochastics and their applications" is acknowledged.

\nocite{*}

\end{document}